\documentclass[11pt,reqno]{amsart}
\usepackage{enumerate, latexsym, amsmath, amsfonts, amssymb, amsthm, color}
\def\pmod #1{\ ({\rm{mod}}\ #1)}
\def\Z{\mathbb Z}
\def\N{\mathbb N}

\def\l{\left}
\def\r{\right}
\def\bg{\bigg}
\def\({\bg(}
\def\){\bg)}
\def\t{\text}
\def\f{\frac}

\def\gs{\geqslant}

\def\al{\alpha}

\def\eq{\equiv}

\def\Ack{\medskip\noindent {\bf Acknowledgments}}
\newcommand{\e}{\mathrm e}
\newcommand{\1}{\mathbf 1}
\newcommand{\meas}{\operatorname{meas}}
\newcommand{\Sing}{\mathfrak S}
\newcommand{\M}{{\mathfrak M}_{+}}
\newcommand{\MD}{{\mathfrak M}_{-}}
\newcommand{\mm}{\mathfrak m}
\newcommand{\eps}{\varepsilon}
\newcommand{\SF}{\mathrm F}
\newcommand{\SL}{\mathrm L}
\newcommand{\Rea}{\operatorname{Re}}
\newcommand{\calA}{\mathcal A}
\newcommand{\calB}{\mathcal B}
\theoremstyle{plain}
\newtheorem{theorem}{Theorem}

\newtheorem{lemma}{Lemma}
\newtheorem{corollary}{Corollary}
\newtheorem{proposition}{Proposition}

\theoremstyle{definition}

\theoremstyle{remark}
\newtheorem{remark}{Remark}

 \vspace{4mm}

\begin{document}

\hbox{Preprint}
\medskip

\title
[{On sums of two primes and six Fibonacci numbers}]
{On sums of two primes \\ and six Fibonacci numbers}

\author
[Z.-W. Sun] {Zhi-Wei Sun}

\address{School of Mathematics, Nanjing
University, Nanjing 210093, People's Republic of China}
\email{zwsun@nju.edu.cn}

\subjclass[2020]{Primary 11P32; Secondary 11B13, 11B39, 11P55.}
\keywords{Additive bases, circle method, Fibonacci numbers, Lucas numbers, primes, representations of positive integers.
\newline \indent Supported by the Natural Science Foundation of China (grant no. 12371004).}

\begin{abstract}  Via the circle method, we deduce that
all sufficiently large positive integers are sums of two primes and six
positive Fibonacci numbers, and are also sums of two primes and seven Lucas numbers.
We also prove that each sufficiently large positive integer can be written as a sum of two primes, three Fibonacci numbers, and three Lucas numbers.
\end{abstract}
\maketitle

\section{Introduction}
\setcounter{lemma}{0}
\setcounter{theorem}{0}
\setcounter{corollary}{0}
\setcounter{proposition}{0}
\setcounter{remark}{0}

In 1934, using the Brun sieve, Romanoff \cite{Roma} proved that a positive proportion
of the odd integers can be written $p+2^n$, where $p$ is a prime and $n\in\Z^+=\{1,2,3,\ldots\}$. 
In 1972 Crocker \cite{Cro} showed that there are infinitely many positive odd integers not of the form $p+2^a+2^b$, where $p$ is a prime and $a,b\in\Z^+$. 

The binary Goldbach conjecture states that any even integer greater than two can be written as a sum of two primes. Although this remains open, Linnik \cite{Linnik51,Linnik53} managed to prove that there is a positive integer $k$
such that each sufficiently large even number is a sum of two primes and $k$ positive powers of two.
It is known that we may take
$$k=54000,\ 25000,\ 2250,\ 1906,\ 13,\ 8,\ 7$$
by Liu, Liu and Wang \cite{LLW}, Li \cite{Li1}, Wang \cite{Wang}, Li \cite{Li2},
Heath-Brown and Schlage--Puchta \cite{HeathBrownPuchta02}, Pintz and Ruzsa \cite{PR20},
and Johnston and Trudgian \cite{JT26}, respectively.
Gallagher's density method \cite{Gallagher75} and the
large-values refinements of Pintz and Ruzsa \cite{PR03,PR20} form the standard framework for
these quantitative versions.  The work of
Bombieri and Davenport \cite{BD66} about the shifted-prime
sieve and its further refinements also play important roles in recent improvements
given in \cite{HeathBrownPuchta02,JT26}.

The classical Fibonacci numbers are given by
$$F_0=0,\ F_1=1, \ \t{and}\ F_{n+1}=F_n+F_{n-1}\ (n=1,2,3,\ldots).$$
The related Lucas numbers are defined by
$$L_0=2,\ L_1=1,\ \t{and}\ L_{n+1}=L_n+L_{n-1}\ (n=1,2,3,\ldots).$$
It is well known that for each $n\in\N=\{0,1,2,\ldots\}$ we have the Binet formulae
$$F_n=\f1{\sqrt5}\l(\l(\f{1+\sqrt5}2\r)^n-\l(\f{1-\sqrt5}2\r)^n\r)$$
and
$$ L_n=\l(\f{1+\sqrt5}2\r)^n+\l(\f{1-\sqrt5}2\r)^n.$$
Note that
$$\sqrt5 F_n\sim L_n\sim \l(\f{1+\sqrt5}2\r)^n$$
as $n\to+\infty$.
It is easy to see that $2\mid F_n$ if and only if $3\mid n$.
For various properties of Fibonacci numbers and Lucas numbers, one may consult the book \cite{S24}.

In 2008,  the author (cf. \cite{S10}) conjectured that any integer $n>4$ can be written as the sum of an odd prime and two positive Fibonacci numbers. In 2010, Lee \cite{Lee} proved that
the set of sums of a prime and a Fibonacci number has a positive lower (asymptotic) density.
In 2025 Wang \cite{Wang25} obtained a similar result which states that the set of sums of a prime and a Lucas number has a positive lower (asymptotic) density.

Unlike the first-order recurrent sequence $(2^n)_{n\gs0}$, both $(F_n)_{n\gs0}$ and $(L_n)_{n\gs0}$
are second-order recurrent sequences. Motivated by the Linnik-Goldbach representations
of large even numbers, in this paper we study sums of two primes and few Fibonacci or Lucas numbers.

Our main results are the following three novel theorems.

\begin{theorem}\label{thm:F6}
Every sufficiently large positive integer $N$ has a representation
\[
 N=p_1+p_2+F_{n_1}+\cdots+F_{n_6},
\]
where $p_1$ and $p_2$ are primes, and $n_1,\ldots,n_6$ are positive integers.
\end{theorem}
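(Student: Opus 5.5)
We follow the Gallagher--Linnik density method, as adapted to powers of two by Heath-Brown and Schlage-Puchta and by Pintz and Ruzsa, with the Fibonacci exponential sum
\[
 G(\alpha)=\sum_{1\le n\le K}\e(\alpha F_n),\qquad K=\lfloor \log N/\log\phi\rfloor,\ \phi=\tfrac{1+\sqrt5}2,
\]
in place of $\sum_{\nu\le \log_2 N}\e(\alpha 2^\nu)$. Let $S(\alpha)=\sum_{p\le N}(\log p)\e(\alpha p)$. We write the weighted number of representations as
\[
 R(N)=\int_0^1 S(\alpha)^2G(\alpha)^6\e(-\alpha N)\,d\alpha .
\]
We split $[0,1)$ into major arcs $\M$ (with $|\alpha-a/q|\le Q/(qN)$ and $q\le Q$, where $Q=(\log N)^B$ or a small power of $N$) and minor arcs $\mm$. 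The goal is $R(N)\gg N K^6/\text{(constant)}$, with the minor arcs contributing strictly less.

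On $\M$, the Siegel--Walfisz theorem (or Gallagher's version with a possible exceptional zero) gives the main term. Because $F_n\bmod q$ is periodic (Pisano period), $G(a/q+\beta)$ factors as $K$ times an average of $\e(aF_n/q)$ over a period, plus errors. The singular series is then a finite Euler-type product involving $\sum_q \mu(q)^2\phi(q)^{-2}\bigl|\text{average}\bigr|^{k}$. We check positivity locally. The only parity obstruction is handled since $F_n$ takes both parities ($2\mid F_n$ iff $3\mid n$), so we can adjust the parity of $N-\sum F_{n_i}$ to be even and make $p_1+p_2$ Goldbach-compatible. The outcome is a main term $\ge c_0 N K^6$ with an explicit $c_0$ computed from a truncated product.

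On $\mm$, we use Parseval, $\int|S|^2\ll N\log N$, together with the large-values estimate for $G$:
\[
 \meas\{\alpha:|G(\alpha)|\ge \lambda K\}\ll N^{-E(\lambda)}.
\]
Here $E(\lambda)$ is obtained, following Pintz--Ruzsa, by bounding high moments $\int|G|^{2m}$. This reduces to counting solutions of $F_{n_1}+\dots+F_{n_m}=F_{n_1'}+\dots+F_{n_m'}$, and we bound these via Zeckendorf-type digit arguments or a transfer-matrix computation over consecutive blocks of the exponents. The minor-arc contribution is then at most $\sup_{\mm}|G|^{6-2r}\int |S|^2|G|^{2r}$. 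One can also use the Gallagher/Heath-Brown--Puchta approach: combine the measure of the large-value set of $G$ with the fact that $S$ is small there, via the Bombieri--Davenport shifted-prime sieve bound for $\int_{E}|S|^2$.

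The main obstacle is quantitative. We need a $\lambda<1$ for which $\lambda^{6}$ (or the corresponding combination) times the minor-arc $S$-integral bound, which carries a constant like $\approx 2\cdot\frac{\phi(N)}{N}$-type sieve constant times $\mathfrak S$, falls below the explicit main-term constant $c_0$. This requires a sharp numerical evaluation of $E(\lambda)$ for the Fibonacci sum, which has less rigid structure than $2^n$. I would first attempt the transfer-matrix computation of $\max_\alpha$-type bounds for products $\prod |\cos|$-analogues along blocks of $F_n\alpha \bmod 1$, using the identity $F_{n+1}\alpha=F_n\alpha+F_{n-1}\alpha$ to reduce to a two-dimensional dynamical system on the torus. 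The resulting eigenvalue gives $E(\lambda)$, and six copies should just suffice.
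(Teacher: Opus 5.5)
You are using the same Linnik--Gallagher framework as the paper, but the decisive step, that \emph{six} Fibonacci numbers suffice, is only asserted (``six copies should just suffice''). The mechanism you propose for that step does not close.

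With one sum $G$ and one threshold $|G|\le\lambda K$ off a bad set $E$, you have two options, and both fail.
\begin{itemize}
\item The bound $\lambda^6K^6\int_0^1|S|^2$ loses a factor $\log N$ against the main term $\asymp NK^6$.
\item The bound $\lambda^4K^4\int_{\mm}|S|^2|G|^2$ uses the inputs actually available: Johnston--Trudgian's shifted-prime constant, the mean singular series $A_{\SF}<1.249$ of same-parity Fibonacci differences, and $\lambda=0.7673$, the threshold the paper certifies with bad-set measure $\ll N^{-3/5-\eta}$. This gives a minor-arc constant of about $0.7673^4\cdot3.468\approx1.20$, against a major-arc constant of about $1$.
\end{itemize}
That route would give seven Fibonacci numbers, not six. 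The paper's final margin for six is about $7\times10^{-4}$, so no heuristic settles it; you need an explicit computation plus further ideas.

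The missing ideas are these.

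(i) Split by parity. Let $A$ collect the even terms ($3\mid n$) and $B$ the odd ones, and use $S^2A^2B^4$ for even $N$ and $S^2AB^5$ for odd $N$. Off the bad set, $|A|^2|B|^4\le\lambda_{\mathrm e}^2J_A^2J_B^2|B|^2$ and $|A||B|^5\le\lambda_{\mathrm o}J_AJ_B^3|B|^2$. So only the energy $\int_{\mm}|S|^2|B|^2$ is needed, and there every pair has even difference. With a single $G$, the energy counts all same-parity pairs (proportion $5/9$), while only about half of the $6$-tuples are admissible for $N$.

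(ii) In the major-arc subtraction for this energy, keep the diagonal $J_BD_N(0)\ge(0.39-o(1))J_BN\log N$ from the Pintz--Ruzsa difference formula on nested arcs. This gives $E_{\calB}<5.6974$, and the paper notes it is exactly the gain that makes six possible.

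(iii) Bound the bad sets $\{xy>\lambda_{\mathrm e}\}$ and $\{xy^3>\lambda_{\mathrm o}\}$ \emph{jointly}. The paper does this with one signed Fibonacci relation carrying position-dependent Bessel weights, counted by the product matrix $M_B(v)M_A(u)M_B(v)$ over the period-three pattern. Since $A$ and $B$ share the phase $\alpha$, separate large-value bounds cannot be multiplied.

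(iv) Your major arcs are too small. With $Q=(\log N)^B$ (Siegel--Walfisz) or $Q=N^{\epsilon}$, the size of $\sup_{\mm}|S|$ forces the bad set to have measure nearly $N^{-1}$, and hence $\lambda$ near $1$. You need $P\ge N^{0.405}$ from Pintz's explicit formula, which gives $\sup_{\mm}|S|\ll N^{4/5+o(1)}$. The exceptional conductors this introduces must then be removed by a Romanov-type argument for Fibonacci residues.
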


\begin{remark} Theorem \ref{thm:F6} indicates that the set 
$$A=\{p+F_a+F_b+F_c:\ p\ \t{is prime}\ \t{and}\ a,b,c\in\Z^+\}$$
is an asymptotic basis of order two (i.e., any sufficiently large positive integer
is a sum of two elements of $A$).
\end{remark}

\begin{theorem}\label{thm:L7}
Each sufficiently large positive integer $N$ has a representation
\[
 N=p_1+p_2+L_{m_1}+\cdots+L_{m_7},
\]
where $p_1$ and $p_2$ are primes, and $m_1,\ldots,m_7$ are positive integers. 
\end{theorem}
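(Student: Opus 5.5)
We will prove Theorem \ref{thm:L7} by the circle method in Gallagher's density form, as for Theorem \ref{thm:F6}. Fix a large $N$. Set $P=N$, $\Lambda$-weighted prime sums
$$S(\alpha)=\sum_{p\le N}(\log p)\,\e(p\alpha),$$
and the Lucas sum $G(\alpha)=\sum_{1\le m\le M}\e(L_m\alpha)$ with $M=\lfloor \log N/\log\frac{1+\sqrt5}2\rfloor-c$. This choice makes every $L_m$ with $m\le M$ smaller than $N/100$, and gives $G(0)=M\asymp\log N$. The weighted count of representations is
$$R(N)=\int_0^1 S(\alpha)^2G(\alpha)^7\e(-N\alpha)\,d\alpha .$$
We split $[0,1)$ into major arcs $\M$ around rationals $a/q$ with $q\le Q$ (with $Q$ a large constant or a small power of $\log N$) and the complementary minor arcs $\mm$.

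\emph{Major arcs.} On $\M$ we use the Siegel--Walfisz approximation $S(a/q+\beta)\approx \frac{\mu(q)}{\varphi(q)}\sum_{n\le N}\e(n\beta)$. The main term then becomes
$$N\sum_{m_1,\dots,m_7\le M}\mathfrak S(N-L_{m_1}-\cdots-L_{m_7}),$$
where $\mathfrak S$ is the binary Goldbach singular series. Since $\mathfrak S(n)=0$ for odd $n$ and $\mathfrak S(n)\gg1$ for even $n$, we need a positive proportion of tuples to have $N-\sum L_{m_i}$ even. This holds because $L_m$ is even exactly when $3\mid m$, so both parities occur among the $L_m$ with positive density. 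The average of $\mathfrak S$ over the tuples is handled, as in Gallagher's argument, by the equidistribution of $(L_m)$ modulo small moduli $q$. Here $(L_m \bmod q)$ is periodic with the Pisano period, and since $\mathfrak S(n)\le \prod_{p\mid n}(1+\frac1{p-2})$ is controlled on average, Cauchy--Schwarz gives a lower bound. The result is a major arc contribution $\gg N M^7$, up to an explicit constant $c_1$.

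\emph{Minor arcs.} Here we bound
$$\int_{\mm}|S|^2|G|^7\le \sup_{\mm}|G|^{\,7-2k}\int_0^1|S|^2|G|^{2k}$$
with a suitable split, or better use the Pintz--Ruzsa large-values framework. The key inputs are the following.

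(i) \emph{Measure of large values of $G$.} Let $E_\lambda=\{\alpha:|G(\alpha)|\ge\lambda M\}$. We want $\meas(E_\lambda)\ll N^{-\theta(\lambda)}$ with an explicit exponent $\theta(\lambda)$, proved by the Pintz--Ruzsa method. The natural analogue of the binary digit product for $2^n$ comes from the linear recurrence: $\|L_{m+1}\alpha\|$ and $\|L_m\alpha\|$ determine $\|L_{m+2}\alpha\|$ up to bounded error. So we can write $|G|^2$ via a transfer operator on pairs $(\|L_m\alpha\|,\|L_{m+1}\alpha\|)$, which gives $\int_0^1|G|^{2k}$ and large-value estimates by counting solutions of $\sum\pm L_{m_i}=0$.

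(ii) \emph{Parseval and Bombieri--Davenport.} We have $\int|S|^2\ll N\log N$, and the $L^2$-norm of $S$ restricted to $E_\lambda$ is controlled through the shifted-prime sieve bound $\sum_{p\le N} \mathbf 1_{p+h\ \mathrm{prime}}\le (4+o(1))\,\mathfrak S(h) N/\log^2N$.

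Combining (i) and (ii) shows that the minor arc contribution is at most $c_2 NM^7$ with $c_2<c_1$, provided seven Lucas summands are used.

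\emph{Main obstacle.} The main obstacle is the explicit numerics of step (i): obtaining a large-values exponent for the Lucas exponential sum strong enough that seven summands suffice. For $2^n$ this is the hardest part of the Pintz--Ruzsa and Heath-Brown--Puchta work. For Lucas numbers the two-dimensional state and the density $1/\log\frac{1+\sqrt5}2$ of terms (compared with $1/\log2$) change the constants, which presumably explains why seven are needed rather than six. I would first try the crude bound $\sup_{\mm}|G|\le(1-\delta)M$ combined with a fourth-moment count $\int|S|^2|G|^{4}$. I would expect this to need refining by the large-values estimate only on the set where $|G|>0.8M$, with the final inequality checked numerically.
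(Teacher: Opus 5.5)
Your skeleton (energy bound where $|G|$ is small, a large-values measure bound elsewhere) is the paper's. As written, though, the proposal has two gaps that prevent it from proving the theorem.

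\textbf{First gap: the dissection.} Your major arcs are $q\le Q$ with $Q$ a constant or a power of $\log N$, handled by Siegel--Walfisz. Then the minor arcs contain points where $G$ is essentially maximal. At $\alpha\asymp Q/N$, just outside the arc at $0$, every $L_m\le N/(Q\log\log N)$ has phase $O(1/\log\log N)$, so $|G(\alpha)|=M-O(\log\log N)$. Hence $\sup_{\mm}|G|\le(1-\delta)M$ is false. The set $E_\lambda=\{|G|>\lambda M\}$ must instead be handled through
$\int_{E_\lambda\cap\mm}|S|^2|G|^7\le M^7\sup_{\mm}|S|^2\,\meas(E_\lambda)$.
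On your minor arcs $\sup|S|\gg N/Q$ (already near $\alpha\asymp Q/N$). What one can prove is only $\meas(E_\lambda)\ll N^{-\delta}$ with $\delta$ slightly above $3/5$. So this bound is no better than $N^{7/5}Q^{-2}M^7$, far from $o(NM^7)$.

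The paper therefore takes major arcs of height $P_+\ge N^{0.405}$, so that $\sup_{\mm}|S|\ll N^{4/5+o(1)}$ and $\delta>3/5$ suffices. On arcs that large Siegel--Walfisz gives nothing. The paper imports Pintz's explicit Goldbach formula and the Pintz--Ruzsa formula for $\int_{\MD}|S|^2e(-h\alpha)$. Both carry secondary terms from possible exceptional zeros. These are neutralized only by the Romanov-type Lemma~\ref{lem:exceptional}: tuples with $q_\nu\mid C(N-\sum L_{m_i})$ carry $o(J^7)$ singular-series weight. None of this appears in your plan.

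\textbf{Second gap: the constants.} Even with the right dissection, the inputs you list do not reach seven summands. On $\mm\setminus E_\lambda$ one uses $|G|^7\le(\lambda M)^5|G|^2$. So one needs $\lambda^5E<c_1$ together with $\meas(E_\lambda)\ll N^{-3/5-\eta}$. Here $E$ is the minor-arc energy constant and $c_1$ the major-arc constant.

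A power saving in the measure needs exponential moments; fixed moments $\int|G|^{2k}$ save only powers of $\log N$. The paper majorizes $\e^{s\cos x}$ by a trigonometric polynomial with positive Bessel coefficients. This turns $\int\exp(s\Rea(\e^{-i\gamma}G))$ into weighted counts of relations $\sum c_jL_j=0$ with $|c_j|\le4$. These are counted by a $361$-state carry automaton with $\rho<1.2333$ at $s=13/20$, which reaches exponent $3/5$ only at $\lambda\approx0.7673$. With $c_1\approx0.997$ this forces $E<3.75$.

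The paper gets $E_{\SL}=[\beta-(1-\omega)]A_{\SL}+d_0<3.5445$ from three ingredients you lack:
\begin{enumerate}
\item the Johnston--Trudgian constant $\beta=3.391$ instead of the Bombieri--Davenport $4$;
\item subtraction of the difference-major-arc term $\int_{\MD}|S|^2\mathcal Q_{\SL}\ge((1-\omega)A_{\SL,J}-o(1))NJ^2$ (the parity projector $\mathcal P_{\SL,7,N}$ and $\mathcal Q_{\SL}$ ensure only even shifts enter);
\item the certified mean $A_{\SL}<1.281$.
\end{enumerate}
With $\beta=4$, even after subtraction, $E\approx4.32$. That forces $\lambda<0.746$, where the certified rate gives exponent only about $0.57<3/5$; without subtraction it is worse.

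Likewise $c_1$ must be the exact local value $0.9969$, obtained by convolving Lucas residue histograms modulo $2d$ for $d\le300$. The cheap bound $\mathfrak S\ge2C_2$ on even targets gives $c_1\approx0.66$, which fails even with the paper's $E_{\SL}$. So your assertion that combining (i) and (ii) yields $c_2<c_1$ with seven Lucas numbers is unsupported. Closing it requires exactly these sharpened inputs.
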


\begin{theorem} \label{thm:mixed33}
Every sufficiently large positive integer $N$ has a representation
\[
 N=p_1+p_2+F_{a_1}+F_{a_2}+F_{a_3}
              +L_{b_1}+L_{b_2}+L_{b_3},
\]
where $p_1$ and $p_2$ are primes, and $a_1,a_2,a_3,b_1,b_2,b_3$ are positive integers.
\end{theorem}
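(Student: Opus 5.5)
We follow the Linnik--Gallagher circle-method framework for the Goldbach--Linnik problem, adapted to the Fibonacci and Lucas sequences. Let $X$ be the largest index with $L_X\le N$, so $X\sim \log N/\log\frac{1+\sqrt5}2$. Put
$$S(\al)=\sum_{p\le N}(\log p)\,\e(p\al),\qquad G(\al)=\sum_{1\le a\le X}\e(F_a\al),\qquad H(\al)=\sum_{1\le b\le X}\e(L_b\al).$$
The weighted number of representations is
$$R(N)=\int_0^1 S(\al)^2G(\al)^3H(\al)^3\e(-N\al)\,d\al .$$
We cut $[0,1]$ into major arcs $\mathfrak M$, consisting of $\al$ near $a/q$ with $q\le P$ for a small power $P$ of $\log N$ (or Gallagher-style major arcs with $q$ up to a fixed large power of $\log N$), and minor arcs $\mathfrak m$.

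\textbf{Major arcs.} On $\mathfrak M$ we replace $S$ by its approximation $\frac{\mu(q)}{\varphi(q)}\sum_{n\le N}\e(n\beta)$ via Siegel--Walfisz, or via Gallagher's treatment of a possible exceptional zero. The main term becomes
$$\frac{N}{(\log\frac{1+\sqrt5}2)^{\,6}}\sum_{a,b,\ldots}\Sing\cdot(\ldots),$$
with singular series
$$\Sing(N-F_{a_1}-\cdots-L_{b_3})=\prod_{p\mid N-\cdots}\frac{p-1}{p-2}\prod_{p>2}\Bigl(1-\frac1{(p-1)^2}\Bigr)$$
when the argument is even, and $0$ when it is odd. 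We thus need a lower bound of order $N X^6$ for the major-arc contribution. To get it, we count tuples $(a_i,b_j)$ for which $N-\sum F_{a_i}-\sum L_{b_j}$ is even and lies in $[N/2,N]$. Parity of $F_n$ is governed by $n\bmod 3$, and $L_n$ is even iff $3\mid n$, so a positive proportion of the $\asymp X^6$ tuples have the right parity. The size condition holds whenever, say, $a_1$ is chosen so that $F_{a_1}$ is a controlled fraction of $N$ and the remaining indices are at most $X-C$. The singular series is bounded below on even arguments, so the main term is $\gg NX^6$.

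\textbf{Minor arcs.} On $\mathfrak m$ we use
$$\Bigl|\int_{\mathfrak m}\Bigr|\le \int_0^1|S|^2\cdot \max\text{-type bounds}.$$
By Parseval, $\int|S|^2\ll N\log N$, so it suffices to bound the measure of the set where $|G|$ or $|H|$ is large, in the Pintz--Ruzsa style. Concretely, let $E_\lambda=\{\al\in\mathfrak m:|G(\al)|\ge\lambda X\}$. We need an estimate $\meas(E_\lambda)\ll N^{-\theta(\lambda)}$, and then combine it with Gallagher's lemma that $\int_{E}|S|^2$ is small compared with $N\log N$ when $E$ is a union of few short intervals. The analogue for powers of two rests on the fact that $|G|$ large forces $\|2^n\al\|$ to be small for many $n$. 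Here the corresponding fact is that if $\|F_n\al\|$ and $\|F_{n+1}\al\|$ are both small, then $\|F_{n+k}\al\|$ stays small for a while, since $F_{n+k}=F_{k-1}F_n+F_kF_{n+1}$. This lets us bound $|G|$ by a product of local factors over consecutive blocks of indices, exactly as in the dyadic case, giving a bound of the form
$$|G(\al)|\le (1-c)X$$
off major arcs together with large-value measure estimates. The same argument applies to $H$, using $L_{n+k}=F_{k-1}L_n+F_kL_{n+1}$.

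The \textbf{main obstacle} is the quantitative large-values estimate: we must show that
$$\int_{\mathfrak m}|S|^2|G|^3|H|^3\,d\al$$
is $o(NX^6)$. With only three copies each of $G$ and $H$ this is delicate. We plan to apply H\"older to split the integral into $\bigl(\int|S|^2|G|^6\bigr)^{1/2}\bigl(\int|S|^2|H|^6\bigr)^{1/2}$. This reduces the problem to the pure cases presumably handled for Theorems \ref{thm:F6} and \ref{thm:L7}, namely minor-arc bounds $\int_{\mathfrak m}|S|^2|G|^6=o(NX^6)$. For $H$ we need the corresponding bound with six copies, whereas Theorem \ref{thm:L7} uses seven Lucas numbers. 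We therefore expect to need a Lucas bound with six copies, possibly at a cost only in constants. If that bound is unavailable, we would instead aim for a direct mixed estimate: bound $\max_{\mathfrak m}|G|^{3}|H|^{3}$ by $\eta X^6$ on the complement of an exceptional set of small measure, and handle that exceptional set via $L^\infty$ bounds on $|G||H|$ combined with Gallagher's lemma.
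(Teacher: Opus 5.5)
The central gap is your minor-arc target. You aim to show $\int_{\mm}|S|^2|G|^3|H|^3\,d\alpha=o(NX^6)$. Nothing available gives this, and the paper proves nothing of that kind, for this theorem or for Theorems~\ref{thm:F6} and~\ref{thm:L7}. The only input on prime pairs is an upper-bound sieve $Z_N(h)\le\beta\Sing(h)N$ with $\beta>1$ ($\beta=3.391$, from Johnston--Trudgian). So after the major arcs remove about $\Sing(h)N$ per shift, an excess of $(\beta-1)$ times the mean of $\Sing(F_r-F_s)$ remains, together with the diagonal $X\sum_p(\log p)^2$. Even the quadratic energy on $\mm$ is therefore bounded only by $E_{\SF}NJ^2$ with $E_{\SF}<3.4679$. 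Consequently the theorem can only follow from an explicit comparison of constants, and your qualitative major-arc bound $\gg NX^6$ cannot close it. Using $\Sing\ge 2C_2$ on the right-parity half of the tuples gives only about $0.66\,NJ^6$. The paper instead computes local densities exactly (odd squarefree $d\le300$, residue histograms modulo $2d$) to certify $0.9965\,NJ^6$. Your fallback is worse: bounding all six recurrence factors by $\eta X^6$ and using Parseval for $|S|^2$ gives $\eta X^6N\log N\asymp\eta NX^7$, which loses a full factor $\log N$. Two recurrence factors must stay inside the integral, paired with $|S|^2$ through the energy bound, and only the remaining product may be bounded pointwise.

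Your Cauchy--Schwarz reduction to pure sextic integrals also fails. There is no six-Lucas estimate to reduce to. Numerically, the paper's $361$-state automaton certifies the single-sum threshold $\lambda=0.7673$ at exceptional measure $\ll N^{-3/5-\eta}$. With that threshold the pure bounds are $\lambda^4E_{\SF}\approx1.20$ and $\lambda^4E_{\SL}\approx1.23$ (times $NJ^6$), both above $0.9965$; this is why Theorem~\ref{thm:L7} needs seven Lucas numbers.

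The missing idea is a \emph{joint} large-values estimate. The paper bounds the exponential moment of $s_{\SF}\Rea(e^{-i\gamma_{\SF}}T_{\SF})+s_{\SL}\Rea(e^{-i\gamma_{\SL}}T_{\SL})$ by a single carry automaton. It writes $L_j=f_j+f_{j-2}$ in the Fibonacci basis and keeps two Lucas digits as memory ($108241$ states, spectral radius $<1.132$). This shows that $|T_{\SF}||T_{\SL}|^3\ge\Lambda J^4$ only on a set of measure $\ll N^{-3/5-\eta_0}$, with $\Lambda=0.2665<\lambda^4\approx0.3466$. The asymmetric split $|T_{\SF}|^3|T_{\SL}|^3\le\Lambda J^4|T_{\SF}|^2$ and the Fibonacci energy then give $0.2665\cdot3.4679\approx0.9242<0.9965$.

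Finally, major arcs with $q\le(\log N)^C$ are too small for this scheme. The exceptional set is handled by $\sup_{\mm}|S|^2\cdot\meas(E)$. With only $\sup_{\mm}|S|\ll N(\log N)^{-A}$ you would need $\meas(E)$ essentially below $1/N$, far beyond what such large-value estimates give (the paper's exponents are just above $3/5$). The paper takes major arcs up to $N^{2/5}$ (Pintz--Ruzsa, Pintz), so $\sup_{\mm}|S|\ll N^{4/5+o(1)}$ and measure $N^{-3/5-\eta}$ suffices. The price is handling exceptional conductors, which the paper removes with a Romanoff-type lemma for Fibonacci and Lucas residues.
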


Our proofs of these three theorems use the Goldbach--Linnik circle-method framework.  The
prime-pair input is a uniform shifted-prime upper bound, while the recurrence
sums supply many shifts and permit large-value estimates through finite carry
automata.  The main difficulty in proving Theorem \ref{thm:F6}
is that the sixth-degree minor-arc polynomial cannot be bounded sharply by a
single-sum threshold.  Splitting the Fibonacci terms by parity resolves this
only if the resulting subsums are treated jointly; separate moment bounds
cannot be multiplied because all subsums depend on the same phase.
The proofs of Theorems \ref{thm:L7} and \ref{thm:mixed33} use the same nested-major-arc framework.

The final certified margins for the above three theorems are shown in the following table:

\[
\begin{array}{c|c|c|c}
\text{case}&\text{major lower bound}&\text{minor upper bound}&\text{margin}\\ \hline
F^6,\ N\text{ even}&1.994081560286761&1.993349561749000&0.000731998537761\\
F^6,\ N\text{ odd}&1.994140081930877&1.988380036000000&0.005760045930877\\
L^7&0.996900300000000&0.942703702774701&0.054196597225299\\
F^3L^3&0.996500340000000&0.924205210500000&0.072295129500000.
\end{array}
\]

\section{Notation, parity classes, and representation integrals}\label{sec:notation}
\setcounter{lemma}{0}
\setcounter{theorem}{0}
\setcounter{corollary}{0}
\setcounter{proposition}{0}
\setcounter{remark}{0}
\setcounter{equation}{0}

Set
\[
 \varphi=\frac{1+\sqrt5}{2},\qquad \psi=\f{1-\sqrt5}2,
 \ \ \t{and}\ \vartheta=\frac{1999}{2000}.
\]
For $j\in\Z^+$, we  define
\[
 u_j^{(\SF)}=F_{j+1}\ \ \t{and}\ \ u_j^{(\SL)}=L_j=\varphi^j+\psi^j=F_{j+1}+F_{j-1}.
\]
For large integer $N>0$,  let $J=J(N)$ be the largest integer for which
\begin{equation}\label{eq:Jdef}
 L_J\le \frac{N^\vartheta}{100}.
\end{equation}
Then 
\begin{equation}\label{eq:Jsymp}
 J=\frac{\vartheta\log N}{\log\varphi}+O(1).
\end{equation}
Let $N_1=N^{0.99}$. Define
\begin{equation}\label{eq:sums}
 S(\alpha)=\sum_{N_1<p\le N}(\log p)e(p\alpha),\ \t{and}\ 
 T_\kappa(\alpha)=\sum_{j=1}^Je(u_j^{(\kappa)}\alpha) \ \t{for}\ 
 \kappa\in\{\SF,\SL\},
\end{equation}
where $e(t)=e^{2\pi it}$.  Write
$T_\kappa^\sharp(\alpha)=T_\kappa(\alpha+1/2)$. Then
\begin{equation}\label{eq:Qdef}
 \mathcal Q_\kappa(\alpha)=\frac12\bigl(|T_\kappa(\alpha)|^2+
 |T_\kappa^\sharp(\alpha)|^2\bigr).
\end{equation}
 contains exactly the pairs of
recurrence terms having the same parity.

For Theorem \ref{thm:F6}, we define
\[
 \calA=\{1\le j\le J:j\equiv2\pmod3\},\ \ 
 \calB=\{1\le j\le J:j\not\equiv2\pmod3\},
\]
\begin{equation}\label{eq:ABsums}
 A(\alpha)=\sum_{j\in\calA}e(F_{j+1}\alpha)\ \ \t{and}\ \
 B(\alpha)=\sum_{j\in\calB}e(F_{j+1}\alpha).
\end{equation}
Since $F_n$ is even exactly when $3\mid n$, every term in $A$ is even and
every term in $B$ is odd.  Moreover,
\begin{equation}\label{eq:JAB}
 J_A:=|\calA|=\frac J3+O(1)\ \ \t{and}\ \ 
 J_B:=|\calB|=\frac{2J}{3}+O(1).
\end{equation}
For even and odd $N$, respectively, put
\begin{align}
 R_{\mathrm e}(N)&=\int_0^1S(\alpha)^2A(\alpha)^2B(\alpha)^4
 e(-N\alpha)\,d\alpha,\label{eq:Re}\\
 R_{\mathrm o}(N)&=\int_0^1S(\alpha)^2A(\alpha)B(\alpha)^5
 e(-N\alpha)\,d\alpha.\label{eq:Ro}
\end{align}
Their discrete expansions have nonnegative coefficients.  Since every prime
in $S$ is odd, $R_{\mathrm e}$ counts representations with two even and four
odd Fibonacci terms, while $R_{\mathrm o}$ counts representations with one
even and five odd Fibonacci terms.  Positivity of the appropriate integral
therefore proves Theorem~\ref{thm:F6}.

For Theorems \ref{thm:L7} and \ref{thm:mixed33}, we use the exact parity projectors
\begin{align}
 \mathcal P_{\SL,7,N}(\alpha)
 &=\frac12\left(T_{\SL}(\alpha)^7+(-1)^N
 T_{\SL}^\sharp(\alpha)^7\right),\label{eq:PL7}\\
 \mathcal P_{3,3,N}(\alpha)
 &=\frac12\left(T_{\SF}(\alpha)^3T_{\SL}(\alpha)^3+(-1)^N
 T_{\SF}^\sharp(\alpha)^3T_{\SL}^\sharp(\alpha)^3\right).
 \label{eq:PM33}
\end{align}
The corresponding weighted representation integrals are
\begin{equation}\label{eq:repintegral}
 \int_0^1S(\alpha)^2\mathcal P(\alpha)e(-N\alpha)\,d\alpha.
\end{equation}
Again every surviving discrete coefficient is nonnegative and has exactly
the parity required for a sum of two odd primes.

\section{Published prime inputs and nested major arcs}\label{sec:majorarcs}
\setcounter{lemma}{0}
\setcounter{theorem}{0}
\setcounter{corollary}{0}
\setcounter{proposition}{0}
\setcounter{remark}{0}
\setcounter{equation}{0}

\subsection{The shifted-prime coefficient}

For a statement $S$, we define $\1_{S}$ as $1$ or $0$ according as $S$ holds or not.
For a nonzero integer $h$, we set 
\begin{equation}\label{eq:Sings}
 \Sing(h)=2C_2\1_{2\mid h}\prod_{\substack{p\mid h\\p>2}}
 \frac{p-1}{p-2}\ \ \ \t{with}\ C_2=\prod_{p>2}\left(1-\frac1{(p-1)^2}\right).
\end{equation}
Johnston and Trudgian \cite[Proposition~2]{JT26} proved that uniformly for $0<h<N$ we have
\[
 \#\{p_1,p_2\le N:p_1-p_2=h\}
 \le(6.7814+\eps)\frac{C_2N}{(\log N)^2}
 \prod_{\substack{p\mid h\\p>2}}\frac{p-1}{p-2}.
\]
  Multiplication by $\log p_1\log p_2\le(\log N)^2$ and the
normalization in \eqref{eq:Sings} yields that for every nonzero even integer $h$ with $|h|<N$ we have
\begin{equation}\label{eq:weightedpair}
 Z_N(h):=\sum_{\substack{N_1<p,q\le N\\p-q=h}}
 (\log p)(\log q)\le(3.3907+\eps)\Sing(h)N.
\end{equation}
We reserve $0.0003$ and use
\begin{equation}\label{eq:beta}
 \beta=3.391.
\end{equation}

\subsection{The two major-arc scales}

For the major-arc parameter $P>1$, set
\[
 \mathfrak M(P)=\bigcup_{q\le P\atop (a,q)=1}
 \left\{\alpha\pmod1:\left|\alpha-\frac aq\right|
 \le\frac{P}{qN}\right\}.
\]
Pintz \cite[Theorem~1]{Pintz23} permits the choice
$P_0=N^{0.41}$ and $\eps_0=0.005$, and hence a cutoff
\begin{equation}\label{eq:Pplus}
 N^{0.405}\le P_+\le N^{0.41}.
\end{equation}
Pintz and Ruzsa \cite[Theorem~2]{PR20} permits
$P_0=N^{2/5}$ and $\eps_0=0.001$, and hence
\begin{equation}\label{eq:Pminus}
 N^{0.399}\le P_-\le N^{0.4}.
\end{equation}
Define
\begin{equation}\label{eq:arcsets}
 \M=\mathfrak M(P_+),\ \ \MD=\mathfrak M(P_-),\ \ \t{and}\ 
 \mm=[0,1]\setminus\M.
\end{equation}
The numerical ranges give the pointwise inclusion $\MD\subset\M$.

For integers $m$ and $h$, we set
\[
 G_N(m)=\int_{\M} S(\alpha)^2e(-m\alpha)\,d\alpha\ \
 \t{and}\ \ 
 D_N(h)=\int_{\MD}|S(\alpha)|^2e(-h\alpha)\,d\alpha.
\]

\begin{proposition}[Nested major-arc package]\label{prop:nested}
For every fixed $\omega>0$, the cutoffs can be chosen so that there are a
constant $B_\omega$, a fixed integer $C_\omega$, and $O_\omega(1)$ odd
squarefree integers $q_\nu=q_\nu(N)$ with $\min_\nu q_\nu\to\infty$, for
which the following $(a)$ and $(b)$ hold uniformly.

{\rm (a)} If $m$ is even with $N/2\le m\le N$, then
\[
 |G_N(m)|\le B_\omega\Sing(m)N,
\]
and, provided no $q_\nu$ divides $C_\omega m$ we have
\begin{equation}\label{eq:Gmain}
 |G_N(m)-\Sing(m)m|\le\omega\Sing(m)N.
\end{equation}

{\rm (b)} If $h$ is even with $0<|h|\le N^\vartheta/50$, then
\[
 |D_N(h)|\le B_\omega\Sing(h)N,
\]
and, provided no $q_\nu$ divides $C_\omega h$ we have
\begin{equation}\label{eq:Dmain}
 |D_N(h)-\Sing(h)N|\le\omega\Sing(h)N.
\end{equation}
Also,
\begin{equation}\label{eq:Dzero}
 D_N(0)=(1+o(1))N\log P_-\ge(0.39-o(1))N\log N.
\end{equation}
\end{proposition}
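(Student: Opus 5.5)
The plan is to derive both parts from the standard major-arc asymptotics for $S(\alpha)$ near $a/q$, with the possible exceptional (Siegel) zeros handled as in Pintz \cite{Pintz23} and Pintz--Ruzsa \cite{PR20}. Those results are exactly what permit the cutoffs $P_+$ and $P_-$ in \eqref{eq:Pplus} and \eqref{eq:Pminus}. First I fix $\omega$. Gallagher's density argument, in the refined form of \cite[Theorem~1]{Pintz23} and \cite[Theorem~2]{PR20}, shows that all but $O_\omega(1)$ primitive characters of conductor at most $P_0$ contribute at most $\omega$ to the major-arc integral. The exceptional characters, each of modulus $\tilde q$, contribute terms which, after summing over $a$, carry a factor $\mu(\tilde q/(\tilde q,h))$-type Ramanujan-sum weight. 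The $\eps_0$ slack in the cutoffs is spent choosing $P_\pm$ inside the allowed window so that the number of exceptional moduli stays bounded. Their odd squarefree kernels give the $q_\nu$, and we have $\min q_\nu\to\infty$ because the modulus of a Siegel-type zero grows with $N$.

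For (a), I expand $\int_{\M}S^2e(-m\alpha)\,d\alpha$ as a sum over $q\le P_+$ and $a$. On each arc, $S(a/q+\beta)$ is approximated by $\frac{\mu(q)}{\varphi(q)}\sum_{N_1<n\le N}e(n\beta)$ plus the exceptional-character terms. Summing the principal-character part over $q\le P_+$ gives the singular series truncated at $P_+$. Its tail is $O(\Sing(m)d(m)/P_+^{1-\eps})$, which is negligible. Integrating over $|\beta|\le P_+/(qN)$ gives the singular integral $\#\{(n_1,n_2): n_1+n_2=m,\ N_1<n_i\le N\}\sim m$, using $m\ge N/2$ and $N_1=N^{0.99}$. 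Hence the main term is $\Sing(m)m$ up to $o(\Sing(m)N)$.

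An exceptional character of conductor $\tilde q$ contributes a term of the form $\pm c\,\Sing(m)\,m\cdot\prod_{p\mid\tilde q}(\ldots)$ times $N^{\tilde\beta-1}$-type factors. This term is $O(\omega\Sing(m)N)$ unless $\tilde q$ essentially divides $m$, up to a fixed bounded factor $C_\omega$ coming from the powers of $2$ and small primes in the conductor. Excluding $q_\nu\mid C_\omega m$ therefore yields \eqref{eq:Gmain}. Without that exclusion, each of the $O_\omega(1)$ exceptional terms is still $\ll\Sing(m)N$, which gives the crude bound $B_\omega\Sing(m)N$.

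Part (b) is the same computation with $|S|^2$ in place of $S^2$ and the shift $-h$. The singular integral becomes $\int|\sum_{N_1<n\le N}e(n\beta)|^2e(-h\beta)\,d\beta$ over $|\beta|\le P_-/(qN)$. Because $|h|\le N^{\vartheta}/50=o(N)$, this integral is $N(1+o(1))$ minus the truncation loss. The truncation loss is controlled by Fej\'er-kernel tails $\ll q/P_-$, and summed against $\mu(q)^2c_q(h)/\varphi(q)^2$ this loss is $o(\Sing(h)N)$. For $h=0$ there are no cancellations: the sum $\sum_{q\le P_-}\mu(q)^2/\varphi(q)\sim\log P_-$ combines with the arc measure to give $D_N(0)=(1+o(1))N\log P_-$. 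Then \eqref{eq:Pminus} gives $\log P_-\ge0.399\log N$, which yields \eqref{eq:Dzero}. Exceptional-character terms at $h=0$ are only $O(N)$ and are absorbed.

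I expect the main obstacle to be the uniform control of the exceptional-zero contributions. The task is to show that they enter only through the shape $\Sing(h)N\cdot(\text{bounded})\cdot\1_{q_\nu\mid C_\omega h}$, and to do so simultaneously for both scales with the same list $q_\nu$. My first attempt is to take the union of the exceptional moduli from the two theorems. Since $\MD\subset\M$, the characters exceptional at level $P_-$ are among those at level $P_+$, so a single finite list serves both parts.
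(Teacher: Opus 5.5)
Your route is the paper's. Its proof quotes \cite[Theorem~1, Lemma~1, (1.21), (7.7)]{Pintz23} for $G_N$ and \cite[Theorem~2, Corollary~1, (4.8)]{PR20} for $D_N$, then takes the union of the odd squarefree parts of the exceptional conductors. Your computations of the truncated singular series, the pole--pole term $\Sing(m)(m-2N_1+O(1))$ and the Fej\'er-tail losses are correct bookkeeping around those same imported formulas.

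The step that fails is the diagonal, where you assert that exceptional-character terms at $h=0$ are $O(N)$. By your own description, such a term is negligible only when its conductor does not essentially divide the shift, and every conductor divides $0$. Concretely, no prime in $(N_1,N]$ divides any $q\le P_-$, so orthogonality of characters gives exactly
\[
 \sum_{\substack{a\bmod q\\(a,q)=1}}\Bigl|S\Bigl(\frac aq+\beta\Bigr)\Bigr|^2
 =\frac1{\phi(q)}\sum_{\chi\bmod q}|\tau(\chi)|^2\,|W(\bar\chi,\beta)|^2,
 \qquad
 W(\chi,\beta)=\sum_{N_1<p\le N}\chi(p)(\log p)\,e(p\beta),
\]
where $\phi$ is Euler's function and $\tau(\chi)$ the Gauss sum. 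Consider a real exceptional character of conductor $r$ with zero $\tilde\beta$. Through the moduli $q=rs$ with $s$ squarefree and coprime to $r$, it contributes an amount of order $N^{2\tilde\beta-1}\log(P_-/r)$. If $1-\tilde\beta\ll1/\log N$ and $r\le P_-^{1-\delta}$ (nothing known excludes $r=N^{o(1)}$), this is of the same order as the main term $N\log P_-$. So your argument does not give $D_N(0)=(1+o(1))N\log P_-$.

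The same identity supplies what is actually needed. Every summand is nonnegative, so discarding the nonprincipal characters leaves
\[
 D_N(0)\ge\sum_{q\le P_-}\frac{\mu(q)^2}{\phi(q)}\int_{|\beta|\le P_-/(qN)}|S(\beta)|^2\,d\beta\ge(1-o(1))N\log P_-.
\]
Here, for $q\le P_-^{1-\epsilon}$, the inner integral is at least $(1-o(1))N$ by the prime number theorem in intervals of length at least $N^{0.6}$. Hence $D_N(0)\ge(0.399-o(1))N\log N$. Only this lower bound from \eqref{eq:Dzero} is used later, in \eqref{eq:Benergy}--\eqref{eq:EB}; in \eqref{eq:majorsubtract} the diagonal is simply discarded. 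The asymptotic equality enters the paper only as a quotation of \cite[(4.8)]{PR20}, and your sketch does not replace it.

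A smaller point: $\MD\subset\M$ does not by itself put the exceptional characters produced by \cite[Theorem~2]{PR20} at level $P_-$ among those produced by \cite[Theorem~1]{Pintz23} at level $P_+$. The two lists come from different theorems with different thresholds. Taking the union, as you also propose and as the paper does, makes this harmless.
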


\begin{proof}
For the Goldbach formula, use \cite[Theorem~1]{Pintz23}.  The pole--pole
term is $\Sing(m)(m-2N_1+O(1))$.  The supplementary singular series are
bounded by \cite[Lemma~1 and (1.21)]{Pintz23}.  Outside divisibility by a fixed multiple of
the finitely many generalized exceptional conductors, these supplementary
terms are $O(\omega\Sing(m)N)$.  Since $N_1=o(N)$, this gives
\eqref{eq:Gmain}; \cite[(7.7)]{Pintz23} also gives the uniform upper bound.

For differences, use Theorem~2 and Corollary~1 of \cite{PR20}; negative $h$ follows by symmetry.  
Taking odd squarefree
parts and the union of the finitely many conductors gives the common family
$q_\nu$.  The diagonal identity \eqref{eq:Dzero} is exactly \cite[(4.8)]{PR20}.  All these formulas
use the same prime interval $(N_1,N]$ as \eqref{eq:sums}.
\end{proof}

The minor-arc prime estimate needed below is
\cite[Lemma 2]{PR20}.  Since $P_+\ge N^{0.405}$, it gives
\begin{equation}\label{eq:Vaughan}
 \sup_{\alpha\in\mm}|S(\alpha)|\ll N^{4/5+o(1)}.
\end{equation}

\section{Periodic collisions and mean singular series}\label{sec:means}
\setcounter{lemma}{0}
\setcounter{theorem}{0}
\setcounter{corollary}{0}
\setcounter{proposition}{0}
\setcounter{remark}{0}
\setcounter{equation}{0}

The singular series has the positive divisor expansion
\begin{equation}\label{eq:divexp}
 \Sing(h)=2C_2\1_{2\mid h}
 \sum_{\substack{2\nmid d\mid h\\d\ \mathrm{is\ squarefree}}}a(d)
 \ \ \t{with}\ \ a(d)=\prod_{p\mid d}\frac1{p-2}.
\end{equation}
The companion certificate proves
\begin{equation}\label{eq:C2bounds}
 0.6601<C_2<0.66017
\end{equation}
by an exact Euler-product truncation.

\subsection{Prime periods and occurrence bounds}

For any integer $q>1$, let $\pi(q)$
be the smallest positive integer $m$ with $F_m\eq0\pmod q$ and $F_{m+1}\eq1\pmod q$,
and let $z(q)$ be the least positive integer $n$ for
which $q\mid F_n$. For any integer $n>1$, clearly
$$q\mid F_n\iff q\ \t{divides}\ (F_n,F_{z(q)})=F_{(n,z(q))}
\iff (n,z(q))=n\iff z(q)\mid n.$$
For any prime $p$, it is well known that $p\mid F_{p-(\f p5)}$
(cf. \cite{Wall60} and \cite{SS92}) and hence $z(p)\mid p-(\f p5)$, where $(-)$ denotes the Legendre symbol.

\begin{lemma}\label{lem:rankperiod} Let $p\not=5$ be an odd prime.
For $c=F_{z(p)+1}\pmod p$, we have
\[
 \pi(p)=z(p)\operatorname{ord}_p(c)=
 \begin{cases}
 4z(p),&z(p)\text{ odd},\\
 z(p),&z(p)\text{ even and }c=1,\\
 2z(p),&z(p)\text{ even and }c=-1.
 \end{cases}
\]
In one complete period modulo $p$, every residue occurs at most four times
in the Fibonacci sequence and at most four times in the Lucas sequence.
\end{lemma}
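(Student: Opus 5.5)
The plan is to prove the period formula by showing that the Fibonacci sequence modulo $p$ gets multiplied by $c$ every $z(p)$ steps, and to prove the occurrence bound by working with the Binet formulae in a finite field. Throughout, write $z=z(p)$.

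\emph{Period formula.} First I use the addition formula
\[
 F_{m+n}=F_mF_{n+1}+F_{m-1}F_n
\]
with $m=z$. Since $F_z\equiv0\pmod p$, this gives $F_{z+n}\equiv F_{z-1}F_n\pmod p$. Also $F_{z-1}=F_{z+1}-F_z\equiv c$. Hence
\[
 F_{z+n}\equiv cF_n \pmod p \quad\text{for all } n\gs 0,
\]
and by induction $F_{kz+n}\equiv c^kF_n$.

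Next I determine where the period can end. Every $m$ with $F_m\equiv0$ is a multiple of $z$, by the equivalence $q\mid F_n\iff z(q)\mid n$ already noted. For $m=kz$ we have $F_{kz+1}\equiv c^k$. So $\pi(p)$ is the least $kz$ with $c^k\equiv1$, namely $\pi(p)=z\operatorname{ord}_p(c)$.

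To find $\operatorname{ord}_p(c)$, I apply Cassini's identity $F_{z-1}F_{z+1}-F_z^2=(-1)^z$. This gives $c^2\equiv(-1)^z\pmod p$.
\begin{enumerate}[(i)]
\item If $z$ is odd, then $c^2\equiv-1$. Since $p$ is odd, $-1\not\equiv1$, so $c$ has order exactly $4$.
\item If $z$ is even, then $c\equiv\pm1$, giving order $1$ or $2$ according as $c=1$ or $c=-1$.
\end{enumerate}
This yields the three cases of the displayed formula.

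\emph{Occurrence bound.} Since $p\ne 2,5$, the polynomial $x^2-x-1$ has distinct roots $\varphi,\psi$ in $K=\mathbb F_{p^2}$, with $\varphi\psi=-1$ and $\varphi-\psi=\sqrt5\ne0$. Binet's formulae hold in $K$.

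I first check that $\pi(p)$ equals the least $m>0$ with $\varphi^m=\psi^m=1$. Suppose $F_m\equiv0$. Then $\varphi^m=\psi^m$, and so
\[
 F_{m+1}=\frac{\varphi^m(\varphi-\psi)}{\sqrt5}=\varphi^m .
\]
Thus the conditions $F_m\equiv0,\ F_{m+1}\equiv1$ are equivalent to $\varphi^m=\psi^m=1$. Consequently the map $n\bmod\pi(p)\mapsto(\varphi^n,\psi^n)$ is injective. Moreover $\psi^n=(-1)^n\varphi^{-n}$, so $n\bmod\pi(p)$ is determined by the pair $(\varphi^n,\ n\bmod 2)$. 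Here $\pi(p)$ is even in all cases: in the cases $c=\pm1$, $z$ is even; when $z$ is odd, $\pi(p)=4z$. So the parity of $n$ is well defined modulo $\pi(p)$.

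Now fix a residue $r$ and a parity $\epsilon\in\{0,1\}$, and put $x=\varphi^n$.
\begin{enumerate}[(i)]
\item For Fibonacci, $F_n\equiv r$ becomes $x-(-1)^\epsilon x^{-1}=r\sqrt5$, that is, $x^2-r\sqrt5\,x-(-1)^\epsilon=0$.
\item For Lucas, $L_n\equiv r$ becomes $x^2-rx+(-1)^\epsilon=0$.
\end{enumerate}
Each quadratic has at most two roots in the field $K$. So for each parity there are at most two admissible $n$ in a period, giving at most four in total for each sequence.

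\emph{Main obstacle.} The only delicate point is the injectivity in the occurrence bound. I must make sure that $\pi(p)$ is the exact period of the pair $(\varphi^n,\psi^n)$, and that parity is well defined modulo $\pi(p)$. Both are handled by the check $F_{m+1}=\varphi^m$ and by the evenness of $\pi(p)$ shown above.
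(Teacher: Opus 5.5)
Your proposal is correct and follows essentially the paper's route. Your identity $F_{z+n}\equiv cF_n$ and Cassini's identity are the entrywise forms of the paper's $Q^{z(p)}=cI$ and $\det Q=-1$, and your occurrence bound uses the same parity split with a quadratic in $x=\varphi^n$, where your explicit check that $(\varphi^n,\psi^n)$ determines $n$ modulo the even number $\pi(p)$ fills in the step the paper only asserts (``equality of two values of $x$ forces a recurrence-state period'').
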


\begin{proof}
At $n=z(p)$, the matrix
$$Q=\begin{pmatrix}1&1\\1&0\end{pmatrix}$$ satisfies $Q^n=cI$ in $\mathbb F_p$, and hence the period is
$n\operatorname{ord}_p(c)$.  Since $\det Q=-1$, one has $c^2=(-1)^n$, which
gives the three cases.

Finally fix the parity of $n$ and put $x=\varphi^n$.  The equations
$F_n=r$ and $L_n=r$ become quadratic equations in $x$, because
$\psi^n=(-1)^nx^{-1}$.  Each has at most two roots.  Within one fixed index
parity, equality of two values of $x$ forces a recurrence-state period, so
the corresponding indices are equal modulo $\pi(p)$.  There are therefore
at most two occurrences for each index parity, hence at most four in total.
\end{proof}

\subsection{Three certified mean constants}

Define
\begin{align}
 A_{\kappa,J}&=\frac1{J^2}
 \sum_{\substack{r,s\le J,\ r\ne s\\
 u_r^{(\kappa)}\equiv u_s^{(\kappa)}\ (2)}}
 \Sing(u_r^{(\kappa)}-u_s^{(\kappa)}),\qquad
 A_\kappa=\limsup_{J\to\infty}A_{\kappa,J},
 \label{eq:Akdef}\\
 A_{\calB,J}&=\frac1{J_B^2}
 \sum_{\substack{r,s\in\calB\\r\ne s}}
 \Sing(F_{r+1}-F_{s+1}),\qquad
 A_{\calB}=\limsup_{J\to\infty}A_{\calB,J}.
 \label{eq:ABdef}
\end{align}

\begin{proposition}\label{prop:means}
We have
\begin{equation}\label{eq:meanbounds}
 A_{\SF}<1.249,\ \ 
  A_{\SL}<1.281,\ \
 \qquad A_{\calB}<2.1985.
\end{equation}
\end{proposition}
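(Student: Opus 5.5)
The plan is to expand $\Sing$ with the divisor formula \eqref{eq:divexp}, swap the order of summation, and reduce each of the three means to a weighted sum over odd squarefree $d$ of collision densities of the recurrence modulo $d$. For $\kappa\in\{\SF,\SL\}$ this gives
\[
 A_{\kappa,J}=\frac{2C_2}{J^2}\sum_{\substack{d\ \mathrm{odd}\\ \mathrm{squarefree}}}a(d)\,
 \#\{r\ne s\le J:\ u_r\equiv u_s\ (\mathrm{mod}\ 2d)\}.
\]
Here the parity condition in \eqref{eq:Akdef} combines with $d\mid(u_r-u_s)$ into one congruence modulo $2d$, by the Chinese remainder theorem. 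The sequence $(u_j)$ modulo $2d$ is purely periodic with period $\pi(2d)=\operatorname{lcm}(3,\pi(d))$. Hence for each fixed $d$, as $J\to\infty$ the normalized count tends to
\[
 c(2d)=\frac1{\pi(2d)^2}\sum_{x\bmod 2d}n_x^2,
\]
where $n_x$ is the number of occurrences of the residue $x$ in one period. The same holds for $A_{\calB}$: the index set $\calB$ is a union of classes modulo $3$, so we restrict to those indices and normalize by $J_B^2$; the parity condition is then automatic, because every $F_{j+1}$ with $j\in\calB$ is odd.

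The steps, in order, are as follows.

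First, I establish the finite-$d$ limits. For $d\le D$ I compute $c(2d)$ exactly by running through one period. Because the occurrence counts are multiplicative across the CRT factors, it suffices to tabulate the joint occurrence data prime by prime and then combine.

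Second, I bound the tail $d>D$ uniformly in $J$. For a prime $p\ne5$, Lemma~\ref{lem:rankperiod} shows that each residue occurs at most four times per period. Therefore, for $r\ne s$,
\[
 \#\{(r,s):\ p\mid u_r-u_s\}\ll J^2/\pi(p)+J.
\]
For composite $d$, a collision modulo $d$ forces a collision modulo every prime factor, so the period is at least $\operatorname{lcm}_{p\mid d}\pi(p)$. The periods grow: $\pi(p)\ge z(p)\gg\log p$. More usefully, by the usual counting argument only $O(T^2)$ primes have $z(p)\le T$, because they all divide $\prod_{n\le T}F_n$. Combining these facts shows that the tail contributes
\[
 \sum_{d>D}a(d)\,\min\bigl(1,\ 4/\pi(d)\bigr)+O(J^{-1+\epsilon}),
\]
and this tends to $0$ as $D\to\infty$. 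The error term $O(J^{-1+\epsilon})$ covers the range of large $d$ where $\pi(d)>J$ and only near-diagonal coincidences survive. Near-diagonal coincidences here mean $u_r-u_s$ divisible by $d$ with $|u_r-u_s|\le\varphi^J$; their number is bounded by the divisor function, since the differences $F_{r}-F_{s}$ factor via $F$ and $L$ identities. Taking $\limsup$ then gives
\[
 A_\kappa\le 2C_2\sum_{d\le D}a(d)\,c(2d)+\text{(explicit tail)}.
\]

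Third, I insert $C_2<0.66017$ from \eqref{eq:C2bounds} and evaluate numerically. The target bounds are $1.249$, $1.281$ and $2.1985$. The value for $\calB$ is larger because of the $J_B^2$ normalization; it should be roughly the $\SF$ value rescaled by $(3/2)^2$ and corrected for the modulus-$3$ structure.

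The main obstacle is making the tail bound explicit and sharp enough to certify the stated decimals. Since $a(p)\approx1/p$, the series $\sum_p a(p)/\pi(p)$ converges only because $\pi(p)$ is typically of size $p$, not just $\log p$. For that step I would first try the following. Split the primes by the size of $z(p)$. Bound the primes with $z(p)\le T$ by the divisibility count $\sum_{n\le T}\omega(F_n)\ll T^2$, using $a(p)\le1$. For the remaining primes, use $\pi(p)\ge z(p)>T$ together with $a(p)\le 3/p$. Then choose $D$ and $T$ so that the total certified tail falls below the remaining margin, with the finite part computed by exact rational arithmetic.
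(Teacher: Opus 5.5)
There is a genuine gap in your tail estimate, and that estimate is what actually certifies the decimals. (Your reduction to collision densities modulo $2d$ is the same as the paper's.)

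With a single fixed threshold $T$, your bounds $\pi(p)\ge z(p)>T$ and $a(p)\le 3/p$ bound the primes $p>D$ with $z(p)>T$ by $\frac{12}{T}\sum_{p>D}\frac1p$, which diverges. Using $a(p)\le1$ on the primes with $z(p)\le T$ leaves a term of size $T^2$, which is not small. So no choice of the constants $D,T$ gives a finite tail, let alone one below a margin of about $0.007$.

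The split has to depend on the size of $p$. The paper works on dyadic blocks $P<p\le2P$ and splits at $z(p)=Z$, with $Z\asymp\sqrt P$ chosen separately in each block. The primes with $z(p)=n$ divide $F_n$, so at most $n\log\varphi/\log P$ of them exceed $P$. Combined with $\pi(2P)-\pi(P)\le1.51012P/\log P$, this gives $\sum_{P<p\le2P}1/z(p)\le2\sqrt{1.51012P\log\varphi}/\log P$, which makes the blocks summable. The paper also computes $z(p)$ and $\pi(p)$ exactly for $D<p\le10^8$.

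The composite moduli are a second missing piece.
\begin{itemize}
\item Your bound $\min(1,4/\pi(d))$, with $\pi(d)$ the lcm of the prime periods, does not follow from Lemma~\ref{lem:rankperiod}. Modulo a composite $d$ a residue can occur more than four times per period: modulo $33$ the period is $40$, and the residue $1$ occurs at $n=1,2,9,31,39$.
\item What ``a collision modulo $d$ is a collision modulo each $p\mid d$'' does give is $\delta(d)\le\delta(p)\le 4/\pi(p)$, with $p$ the largest prime factor of $d$. Grouping by that prime produces the weight $\frac{1}{p-2}W(p)$, where $W(p)=\prod_{3\le q<p,\ q\ \mathrm{prime}}(1+\frac1{q-2})\asymp\log p$.
\item This extra factor $\log p$ is the real reason $z(p)$ must typically be much larger than $\log p$. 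The prime series $\sum_p a(p)/\pi(p)$ already converges because $\pi(p)\gg\log p$.
\item The grouping also requires a separate explicit bound for moduli $d>D$ all of whose prime factors are at most $D$; your plan never treats these.
\end{itemize}

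Your computational shortcut is also wrong. Pisano periods of odd primes are all even, so the residue sequences modulo different primes are correlated, and collision densities are not multiplicative under the Chinese remainder theorem. Modulo $33$ the density is $118/1600$, whereas the product of the densities modulo $3$ and $11$ is $(22/64)(18/100)$. Using the product would understate the finite part. You have to form histograms over the joint period $\operatorname{lcm}(\pi(d),3)$, as the paper does.

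Finally, the endpoint term is only gestured at. The clean fix is to note that only $d\mid H_J$ can occur, where $H_J$ is the product of the nonzero differences, and that $\sum_{d\mid H_J}a(d)\ll(\log J)^2=o(J)$.
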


\begin{proof}
We describe the full finite reduction and the tail argument.  For odd
squarefree $d$, put $T_d=\operatorname{lcm}(\pi(d),3)$.  Over a complete
length-$T_d$ block, form the residue histograms for the two value-parity
classes of $F_{j+1}$ or $L_j$.  Their squared $\ell^2$ norms, divided by
$T_d^2$, are the exact collision densities $\delta_{\SF}(d)$ and
$\delta_{\SL}(d)$.  For $\calB$, count the indices $n\not\equiv0\pmod3$ in
a complete joint period and divide the squared histogram norm by
$(2T_d/3)^2$; denote the result by $\delta_{\calB}(d)$.

For each fixed $d$, periodicity and \eqref{eq:divexp} identify the limiting
$d$-contribution.  To control all moduli before passing to the limit, group a
squarefree $d$ by its largest prime factor $p$.  Congruence modulo $d$
implies congruence modulo $p$, so the collision density for $d$ is no larger
than the corresponding prime collision density.  Lemma~\ref{lem:rankperiod}
gives
\begin{equation}\label{eq:primecollision}
 \delta_{\SF}(p),\delta_{\SL}(p)\le\frac4{\pi(p)}
 \ \ \t{and}\ \ 
 \delta_{\calB}(p)\le\frac6{\pi(p)}\le\frac6{z(p)}.
\end{equation}
The factor $6$ in the last inequality comes from repeating a Pisano period
inside $\operatorname{lcm}(\pi(p),3)$ and then normalizing by the
$2/3$-density of $\calB$.

For a finite interval of indices the occurrence bound has an additional
$O(1/J)$ endpoint term.  This term is still summable because only moduli
dividing the product of the nonzero differences can occur.  If that product
is $H_J$, then $\log H_J=O(J^3)$ and
\[
 \sum_{d\mid H_J}a(d)=\prod_{p\mid H_J}\left(1+\frac1{p-2}\right)
 \ll(\log\log(3H_J))^2\ll(\log J)^2.
\]
For completeness, the displayed product bound follows by splitting at
$y=\log(3H_J)$: the primes at most $y$ contribute $O(\log y)$ by
\eqref{eq:Widentity}, while the logarithm of the product over primes larger
than $y$ is at most
$O(\omega(H_J)/y)=O(1/\log y)$.
After division by $J$ this is $o(1)$.  This proves that the moving-cutoff
upper bounds below apply directly to the limsups in
\eqref{eq:Akdef}--\eqref{eq:ABdef}.

Let
\[
 W(p)=\prod_{3\le q<p}\left(1+\frac1{q-2}\right).
\]
For a cutoff $D$, exact integer cyclic convolution evaluates every
squarefree $d\le D$.  The omitted moduli whose largest prime is $p\le D$ are
bounded exactly by
\[
 \frac{\delta(p)}{p-2}\bigl(W(p)-W_p(D/p)\bigr),
\]
where $W_p(X)$ is the exact $a(e)$-sum over squarefree $e\le X$ with
$P^+(e)<p$.  For the same-parity constants we take $D=10000$ and obtain,
already including $2C_2<1.32034$,
\[
 1.241697034908400\quad(\SF),\qquad
 1.274207656787892\quad(\SL).
\]
For $A_{\calB}$ we take $D=390000$ and obtain
\[
 2.191813461683394.
\]

For $D<p\le10^8$, the certificate factors
$p-(5/p)$, determines $z(p)$ by fast doubling, determines $\pi(p)$ from
Lemma~\ref{lem:rankperiod}, and sums the outward-rounded bounds in
\eqref{eq:primecollision}.  The brackets before multiplication by $2C_2$
are
\[
 0.001696971409610\quad\text{for the same-parity constants}\]
 and 
 \[
 0.000107359308192\quad\text{for }A_{\calB}.
\]

It remains to bound $p>10^8$.  The identity
\begin{equation}\label{eq:Widentity}
 W(x)=\frac12\prod_{p<x}\left(1-\frac1p\right)^{-1}
 \prod_{3\le p<x}\left(1+\frac1{p(p-2)}\right)
\end{equation}
combined with Rosser and Schoenfeld \cite[Theorem~8 and Corollary~1]{RS62}, gives
$W(x)<1.36\log x$ for $x\ge10^8$.  The second product in
\eqref{eq:Widentity} is certified to be below $1.51485$, and it is easy to see that
 $e^\gamma<1.7811$.  This last inequality is not
imported: it uses
$H_n-\log n-\gamma>1/(2n)-1/(12n^2)$ at $n=1000$ and exact atanh-series
bounds for the two logarithms.  The summand comparison behind that harmonic
inequality has derivative $t^4/(6(1+t)^3)>0$.

On a dyadic block $P<p\le2P$, split at $z(p)=Z$.  Since every prime with
$z(p)=n$ divides $F_n$,
\[
 \sum_{\substack{P<p\le2P\\z(p)\le Z}}\frac1{z(p)}
 \le\frac{Z\log\varphi}{\log P}.
\]
By \cite[Corollary~1]{RS62}, we have
\[
 \pi(2P)-\pi(P)\le1.51012\frac P{\log P}.
\]
Optimizing $Z$ therefore yields
\begin{equation}\label{eq:rankblock}
 \sum_{P<p\le2P}\frac1{z(p)}
 \le\frac{2\sqrt{1.51012\,P\log\varphi}}{\log P}.
\end{equation}
Summing \eqref{eq:rankblock} over $P=2^k10^8$ gives brackets below
$0.00330$ for the same-parity constants and below $0.00494$ for
$A_{\calB}$.  Exact rational assembly, using \eqref{eq:C2bounds}, gives
\begin{align*}
 A_{\SF}&<1.248294736140<1.249,\\
 A_{\SL}&<1.280805358019<1.281,\\
 A_{\calB}&<2.198477692073<2.1985.
\end{align*}
All numerical data are obtained with the aid of computer.
\end{proof}

\section{Exceptional conductors}\label{sec:exceptional}
\setcounter{lemma}{0}
\setcounter{theorem}{0}
\setcounter{corollary}{0}
\setcounter{proposition}{0}
\setcounter{remark}{0}
\setcounter{equation}{0}

The exceptional divisibility conditions in Proposition~\ref{prop:nested}
are removed by a recurrence analogue of Romanov's argument.

\begin{lemma}[Exceptional-modulus lemma]\label{lem:exceptional}
Let $q=q(N)$ be odd and squarefree with $q\to\infty$, and let $C$ be fixed.
For any of the families $F_{j+1}$, $L_j$, or either periodic Fibonacci
subfamily $\calA,\calB$, uniformly for $|m|\le2N$,
\begin{equation}\label{eq:exceptional}
 \sum_{\substack{j\ \mathrm{in\ the\ family}\\m\ne u_j\\
 q\mid C(m-u_j)}}\Sing(m-u_j)=o(J_{\mathrm{family}}).
\end{equation}
\end{lemma}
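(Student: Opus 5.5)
We will bound the left side of \eqref{eq:exceptional} by Cauchy--Schwarz, splitting it into a count of the relevant indices $j$ and a second moment of $\Sing$.

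\emph{Step 1: reduce to an index count.} Write $q'=q/(q,C)$. Since $C$ is fixed and $q$ is squarefree, $q'\ge q/|C|\to\infty$, and $q\mid C(m-u_j)$ forces $q'\mid m-u_j$. Thus it suffices to treat the condition $u_j\equiv m\pmod{q'}$. Put
\[
 \mathcal J=\{j:\ u_j\equiv m\ (q'),\ u_j\ne m\}.
\]
Cauchy--Schwarz gives
\[
 \sum_{j\in\mathcal J}\Sing(m-u_j)\le|\mathcal J|^{1/2}\Bigl(\sum_{j\le J}\Sing(m-u_j)^2\Bigr)^{1/2}.
\]

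\emph{Step 2: the second moment is $O(J)$.} Expand $\Sing^2$ with the divisor expansion \eqref{eq:divexp}. Since $\bigl(\tfrac{p-1}{p-2}\bigr)^2=1+\tfrac{2p-3}{(p-2)^2}$, this gives $\Sing(h)^2\ll\sum_{2\nmid d\mid h,\ d\ \mathrm{squarefree}}b(d)$ with $b(d)=\prod_{p\mid d}\tfrac{3}{p-2}$. Hence
\[
 \sum_j\Sing(m-u_j)^2\ll\sum_d b(d)\,\#\{j\le J:\ u_j\equiv m\ (d)\}.
\]
Let $p$ be the largest prime factor of $d$. By Lemma~\ref{lem:rankperiod}, the count is at most $4J/\pi(p)+O(\pi(p))$. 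We can also use the trivial bound $J$, and the fact that $d$ must divide some nonzero $m-u_j$, as in the endpoint argument in the proof of Proposition~\ref{prop:means}. The tail over large $p$ is controlled by $\sum_p 1/z(p)$ with the dyadic bound \eqref{eq:rankblock}. This yields $O(J)$, exactly as in the convergence of the mean constants $A_\kappa$; the extra factor $3^{\omega(d)}$ is harmless because the same $W$-type product estimate applies.

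\emph{Step 3: the index count is $o(J)$.} Let $p$ be any prime factor of $q'$ other than $5$. By Lemma~\ref{lem:rankperiod}, $|\mathcal J|\le 4J/\pi(p)+O(\pi(p))$. The difficulty is that $q'$ may tend to infinity while all of its prime factors stay bounded, or while $\pi(p)$ exceeds $J$.

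To handle this we work with the Pisano period $\pi(q')$ directly, using $\pi(q')=\operatorname{lcm}_{p\mid q'}\pi(p)$. As $q'\to\infty$ we have $\pi(q')\to\infty$, because $F_n$ grows only exponentially in $n$ and so $q'\mid F_{\pi(q')}$ forces $\pi(q')\ge c\log q'$.

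The number of solutions of $u_j\equiv m\pmod{q'}$ in one period is at most $4^{\omega(q')}$, so the count over $j\le J$ is at most $\bigl(J/\pi(q')+1\bigr)4^{\omega(q')}$. This bound is not strong enough as it stands. Instead, we count repeated values: if $j<j'\le J$ both lie in $\mathcal J$, then $q'\mid u_{j'}-u_j$.

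\emph{Main obstacle.} The step I expect to be hardest is showing that only $o(J)$ indices satisfy $u_j\equiv m\pmod{q'}$ when $q'$ is large. The plan is as follows. Fix $K$ and consider the indices in $\mathcal J$. Any two of them $j<j'$ with $j'-j\le K$ give $q'\mid u_{j'}-u_j$, and this difference is a nonzero element of a finite set of linear recurrences evaluated at $j$, namely $F_{j'+1}-F_{j+1}=F_{j+1}(F_{k}\text{-type})+\cdots$. Via the gcd formula $(F_a,F_b)=F_{(a,b)}$, this bounds $q'$ in terms of $F_{O(K)}$ times factors coprime to most indices.

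I expect the cleanest route to be a Romanov-style dichotomy. Either $q'$ has a prime factor $p$ with $\pi(p)\to\infty$, and then Step 3 gives $|\mathcal J|\ll J/\pi(p)+\pi(p)$, which is $o(J)$ when $\pi(p)\le\sqrt J$; larger $\pi(p)$ is handled by grouping via $z(p)$ and the residue-occurrence bound over the incomplete period. Or every prime factor of $q'$ has bounded $\pi(p)$, and then, since $q'$ is squarefree and only finitely many primes have bounded rank, $q'$ itself is bounded, a contradiction.

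Combining the steps gives $\bigl(o(J)\bigr)^{1/2}O(J)^{1/2}=o(J)$. The constants are uniform in $|m|\le2N$, since $m$ enters only through a residue class.
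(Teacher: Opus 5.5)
Your Cauchy--Schwarz route is sound and differs from the paper's. The paper keeps the sum linear. It expands $\Sing(m-u_j)$ by \eqref{eq:divexp}, so the exceptional condition and the divisor merge into the single modulus $[d,q]$. It then bounds $\#\{j\le J:[d,q]\mid m-u_j\}\ll J/T([d,q])+1$ with $T(d)=\max_{p\mid d}\pi(p)$, and proves $\sum_d a(d)/T(d)<\infty$ using the fact that primes with $\pi(p)\le x$ divide $\prod_{n\le x}F_n$. It concludes by dominated convergence, since $T([d,q])\ge T(q)\to\infty$; the $+1$ terms are confined to $d\mid H$. You instead decouple the two issues: $q$ enters only through the bare count $|\mathcal J|$, and $\Sing$ only through an $m$-uniform second moment. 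That costs squaring $\Sing$ (weights $b(d)$ instead of $a(d)$), but the second moment follows from the same summability and $d\mid H$ arguments. As a by-product you get the explicit rate $O\bigl(J\,T(q')^{-1/2}+J^{1/2}\bigr)$, whereas the paper's dominated-convergence step is stated only qualitatively.

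One correction is needed, and it dissolves your ``main obstacle''. Any $\pi(p)$ consecutive indices form a complete period. So Lemma~\ref{lem:rankperiod} gives at most $4\lceil J/\pi(p)\rceil\le 4J/\pi(p)+4$ indices $j\le J$ with $u_j\equiv m\pmod p$, not $4J/\pi(p)+O(\pi(p))$.

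With your error term, Step~2 is not justified. Falling back on the trivial bound $J$ for moduli with large $\pi(P^+(d))$ only yields $O(J(\log J)^3)$ after summing $b(d)$ over $d\mid H$. Step~3 likewise gives nothing once $\pi(p)\gtrsim J$.

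With the $+4$, take $p\mid q'$ with $\pi(p)=T(q')$. Your own squarefree argument gives $T(q')\to\infty$, hence $|\mathcal J|\le 4J/T(q')+4=o(J)$ immediately. The Pisano period of $q'$, the $4^{\omega(q')}$ count, and the $K$-window gcd discussion are then unnecessary and can be deleted.
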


\begin{proof}
Delete the fixed common factor of $q$ with $30C$.  For an odd squarefree
$d$, let $T(d)$ be the maximum of $\pi(p)$ over primes $p\mid d$, with
$T(1)=1$.  Lemma~\ref{lem:rankperiod} gives at most four occurrences of a
residue in a complete prime period, and therefore
\[
 \#\{j\le J:[d,q]\mid m-u_j\}
 \ll \frac{J}{T([d,q])}+1.
\]

We first show
\begin{equation}\label{eq:Tsum}
 \sum_{d\ \mathrm{odd\ squarefree}}\frac{a(d)}{T(d)}<\infty.
\end{equation}
Every prime with $\pi(p)\le x$ divides $\prod_{n\le x}F_n$, whose logarithm
is $O(x^2)$.  Hence
\[
 \sum_{T(d)\le x}a(d)
 =\prod_{\pi(p)\le x}\left(1+\frac1{p-2}\right)
 \ll(\log x)^2.
\]
Dyadic summation proves \eqref{eq:Tsum}.

Expand \eqref{eq:exceptional} by \eqref{eq:divexp}.  The part proportional
to $J$ is $o(J)$ by dominated convergence, because
$T([d,q])\to\infty$ for each fixed $d$.  For the endpoint term, only $d$
dividing
\[
 H=\prod_{j\le J,\,m\ne u_j}|m-u_j|
\]
occur.  Since $\log H=O(J\log N)=O(J^2)$,
\[
 \sum_{d\mid H}a(d)\ll(\log\log(3H))^2\ll(\log J)^2=o(J).
\]
Fixed factors removed at the beginning alter only the implied constant.
\end{proof}

\begin{corollary}\label{cor:bad}
The total singular-series weight of exceptional recurrence differences is
$o(J^2)$, and the total singular-series weight of exceptional tuples in each
of the three representation problems is $o(J^k)$, where $k$ is the number
of recurrence terms.
\end{corollary}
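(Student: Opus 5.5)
The idea is to reduce Corollary~\ref{cor:bad} to Lemma~\ref{lem:exceptional} by fixing all recurrence indices except one and summing. I would first make ``exceptional'' precise. In the difference problem, a pair $(r,s)$ with $r\ne s$ is exceptional when some $q_\nu$ from Proposition~\ref{prop:nested} divides $C_\omega(u_r-u_s)$. In a $k$-term representation problem, a tuple $(j_1,\dots,j_k)$ is exceptional when some $q_\nu$ divides $C_\omega m$, where $m=N-u_{j_1}-\cdots-u_{j_k}$. Its weight is then $\Sing(m)$, or for the pairs entering $D_N$, the corresponding $\Sing$ of the difference. There are only $O_\omega(1)$ indices $\nu$, so it suffices to treat one $q=q_\nu$ at a time. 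Each such $q$ is odd, squarefree and tends to infinity, so it satisfies the hypotheses of Lemma~\ref{lem:exceptional} with $C=C_\omega$.

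For differences, I fix $s$ and apply \eqref{eq:exceptional} with $m=u_s$ and summation variable $r$, running over whichever family is relevant ($F_{j+1}$, $L_j$, $\calA$ or $\calB$). We have $|m|\le N^\vartheta\le 2N$, and the condition $m\ne u_r$ only removes diagonal collisions, which are not counted anyway. The lemma gives $o(J)$ uniformly in $s$. Summing over the $O(J)$ values of $s$ gives $o(J^2)$. Here the uniformity in $m$ stated in the lemma is exactly what is needed.

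For a $k$-tuple, I single out the last index $j_k$ and fix the other $k-1$ indices. This makes $m'=N-u_{j_1}-\cdots-u_{j_{k-1}}$ fixed, with $|m'|\le N+kN^\vartheta\le2N$, and the summand becomes $\Sing(m'-u_{j_k})$ with the condition $q\mid C_\omega(m'-u_{j_k})$. This is again of the form \eqref{eq:exceptional}, so the inner sum is $o(J)$ uniformly. Summing over the $O(J^{k-1})$ outer choices gives $o(J^k)$. For the mixed problem of Theorem~\ref{thm:mixed33} and for the split $A^2B^4$, $AB^5$ problems, I take the free index from any single family, say the last $F$ or $\calB$ index. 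Every other factor contributes only its count, which is $O(J)$.

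The one point needing care is tuples with $m'=u_{j_k}$, that is $m=0$, which the lemma excludes. These cannot occur in the representation problem: Proposition~\ref{prop:nested}(a) only concerns $N/2\le m\le N$, and $m\ge N-kN^\vartheta>0$. Likewise the parity restrictions, even $m$ or same-parity differences, only shrink the sums, since all summands are nonnegative. I expect no real obstacle beyond checking that the uniformity in Lemma~\ref{lem:exceptional} covers all these shifted values $m'$. It does, because the bound depends only on $|m'|\le2N$.
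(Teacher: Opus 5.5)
Your proposal is correct and follows the paper's own argument. The paper likewise fixes all but one recurrence index (the remaining index for differences, the last index for $k$-tuples), applies Lemma~\ref{lem:exceptional} using its uniformity in $|m|\le 2N$, and sums over the fixed indices. Your extra bookkeeping makes explicit what the paper's three-line proof leaves implicit: the union over the $O_\omega(1)$ conductors $q_\nu$, ruling out $m=0$ because $m\ge N/2$, and dropping parity restrictions by nonnegativity.
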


\begin{proof}
For differences apply Lemma~\ref{lem:exceptional} and sum over the remaining
index.  For a $k$-tuple, fix $k-1$ indices and apply the lemma to the last
one.  The uniformity in $m$ permits the summation.
\end{proof}

\section{Quadratic energies after major-arc subtraction}\label{sec:energy}
\setcounter{lemma}{0}
\setcounter{theorem}{0}
\setcounter{corollary}{0}
\setcounter{proposition}{0}
\setcounter{remark}{0}
\setcounter{equation}{0}

\subsection{The full Fibonacci and Lucas parity energies}

Expanding \eqref{eq:Qdef}, applying \eqref{eq:weightedpair} to every
nonzero shift, and using the same finite $A_{\kappa,J}$ in both terms gives
\begin{align}
 \int_0^1|S|^2\mathcal Q_\kappa\,d\alpha
 &\le\left(\beta A_{\kappa,J}+d_0+o(1)\right)NJ^2,
 \label{eq:fullenergy}\\
 \int_{\M}D|S|^2\mathcal Q_\kappa\,d\alpha
 &\ge\left((1-\omega)A_{\kappa,J}-o(1)\right)NJ^2,
 \label{eq:majorsubtract}
\end{align}
where $\omega=10^{-4}$ and
\begin{equation}\label{eq:d0}
 d_0=\frac{\log\varphi}{\vartheta}<0.481453.
\end{equation}
For the first diagonal estimate put
$\theta(x)=\sum_{p\le x}\log p$ and use
$\sum_{p\le N}(\log p)^2\le(\log N)\theta(N)$ together with Axler's Theorem~1, inequality~(1.5) on page~2 of
\cite{Axler18}, which states
$\theta(x)<x+0.15x/\log^3x$ for every $x>1$.  The lower estimate discards
the nonnegative diagonal and uses Proposition~\ref{prop:nested} on every
nonexceptional shift; Corollary~\ref{cor:bad} absorbs the others.

Since $\MD\subset\M$, subtraction and only then the limsup bounds in
Proposition~\ref{prop:means} give
\begin{equation}\label{eq:minorenergy}
 \int_{\mm}|S(\alpha)|^2\mathcal Q_\kappa(\alpha)\,d\alpha
 \le(E_\kappa+o(1))NJ^2,
\end{equation}
where
\[
 E_\kappa=[\beta-(1-\omega)]A_\kappa+d_0
\]
with
\begin{equation}\label{eq:Evalues}
 E_{\SF}<3.467937\ \ \t{and}\ \ 
 E_{\SL}<3.544453.
\end{equation}

\subsection{The odd-Fibonacci class energy}

For the sum $B$ define the analogous full and difference-major integrals.
The off-diagonal calculation uses $A_{\calB,J}$.  Its full diagonal is
\[
 J_B\sum_{N_1<p\le N}(\log p)^2
 \le\left(d_B+o(1)\right)NJ_B^2
 \ \ \t{with}\ \
 d_B=\frac{3\log\varphi}{2\vartheta}<0.72218.
\]
Unlike the preceding estimate, we retain the difference-major diagonal.
By \eqref{eq:Dzero},
\[
 J_BD_N(0)\ge(0.39-o(1))J_BN\log N
 =(0.39d_B-o(1))NJ_B^2.
\]
Consequently
\begin{equation}\label{eq:Benergy}
 \int_{\mm}|S(\alpha)|^2|B(\alpha)|^2\,d\alpha
 \le(E_{\calB}+o(1))NJ_B^2,
\end{equation}
where
\begin{align}
 E_{\calB}
 &\le[3.391-(1-10^{-4})]A_{\calB}+0.61d_B\notag\\
 &<5.697364.\label{eq:EB}
\end{align}
The positive diagonal retained here is the numerical gain that makes the
six-Fibonacci closure possible.
\section{Large values and exact carry automata}\label{sec:largevalues}
\setcounter{lemma}{0}
\setcounter{theorem}{0}
\setcounter{corollary}{0}
\setcounter{remark}{0}
\setcounter{equation}{0}

\subsection{A positive Bessel majorant}

For $s>0$ and an integer $D\ge1$, let $I_n(s)$ denote the modified Bessel
function, and put
\begin{equation}\label{eq:Besselmajorant}
 P_{s,D}(x)=w_0+2\sum_{n=1}^D I_n(s)\cos(nx)
 \ \ \t{and}\ \ 
 w_0=\e^s-2\sum_{n=1}^D I_n(s).
\end{equation}
For every parameter pair used below, the companion certificate proves
$w_0>0$ by outward rational bounds.  The Fourier expansion of
$\e^{s\cos x}$ gives
\[
 P_{s,D}(x)-\e^{s\cos x}
 =2\sum_{n>D}I_n(s)(1-\cos nx)\ge0.
\]
Thus exponential moments are bounded by weighted counts of zero relations
with digits in $[-D,D]$.

The same second-order recurrence encodes Fibonacci and Lucas relations.
Let $c_1,\ldots,c_J$ be a signed digit word, reverse it by
$d_i=c_{J+1-i}$, set $x_{-1}=x_0=0$, and define
\begin{equation}\label{eq:carryrec}
 x_i=x_{i-1}+x_{i-2}+d_i.
\end{equation}
A direct induction gives
\[
 x_i=\sum_{r=1}^i d_rF_{i-r+1}.
\]
Consequently,
\begin{equation}\label{eq:terminalidentities}
 x_J+x_{J-1}=\sum_{j=1}^Jc_jF_{j+1} \ \ \t{and}\ \ 
 x_J+2x_{J-1}=\sum_{j=1}^Jc_jL_j,
\end{equation}
where $L_j=F_j+2F_{j-1}$ for $j\ge1$.  Thus a shifted-Fibonacci
zero relation has terminal condition $x_J+x_{J-1}=0$, and a Lucas zero
relation has terminal condition $x_J+2x_{J-1}=0$.  The mixed argument below
also uses a complete Fibonacci word whose terminal condition is $x_J=0$.

\begin{lemma}[Carry confinement]\label{lem:confinement}
If $|d_i|\le D$ and the digit word satisfies one of the three terminal
conditions
\[
 x_J+x_{J-1}=0,\qquad x_J+2x_{J-1}=0,\qquad x_J=0,
\]
then all carry states $(x_i,x_{i-1})$ lie in
$[-\lceil1.895D\rceil,\lceil1.895D\rceil]^2$.
\end{lemma}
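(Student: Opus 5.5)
The plan is to diagonalize the carry recurrence \eqref{eq:carryrec} along the eigenvectors of $Q$. Put
\[
 y_i=x_i+(\varphi-1)x_{i-1},\qquad z_i=x_i-\varphi x_{i-1}.
\]
Using $\varphi(\varphi-1)=1$ and $\psi=1-\varphi=-1/\varphi$, a one-line check from \eqref{eq:carryrec} gives the two scalar recursions
\[
 y_i=\varphi\,y_{i-1}+d_i,\qquad z_i=\psi\,z_{i-1}+d_i,
\]
with $y_0=z_0=0$. The inverse relations are $x_{i-1}=(y_i-z_i)/\sqrt5$ and $x_i=(\varphi z_i+ (\varphi-1)\cdot 0+\ldots)$; more simply, $x_i=(\varphi y_i+(\varphi-1)z_i)/\sqrt5$. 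So it suffices to bound $|y_i|$ and $|z_i|$ uniformly in $i$.

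\emph{Stable coordinate.} Since $|\psi|=\varphi-1<1$ and $z_0=0$, induction gives
\[
 |z_i|\le D\sum_{k\ge0}|\psi|^k=\frac{D}{2-\varphi}=\varphi^2D
\]
for every $i$. No terminal condition is needed for this step.

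\emph{Unstable coordinate.} Here I use an escape argument. Suppose $|y_i|>\varphi D=D/(\varphi-1)$ for some $i$. Then
\[
 |y_{i+1}|\ge\varphi|y_i|-D>|y_i|,
\]
so $|y|$ stays above $\varphi D$ and increases strictly up to $i=J$. It therefore suffices to show $|y_J|\le\varphi D$, and this is where the terminal condition enters.

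Each of the three conditions has the form $x_J=\lambda x_{J-1}$ with $\lambda\in\{-1,-2,0\}$. Then
\[
 y_J=(\lambda+\varphi-1)x_{J-1},\qquad z_J=(\lambda-\varphi)x_{J-1}.
\]
The stable bound then gives $|x_{J-1}|\le\varphi^2D/|\lambda-\varphi|$, and hence
\[
 |y_J|\le\frac{|\lambda+\varphi-1|\,\varphi^2}{|\lambda-\varphi|}\,D .
\]
The three cases give the following values of this bound:
\begin{itemize}
 \item $\lambda=-1$: the bound is $(2-\varphi)D\approx0.382D$;
 \item $\lambda=0$: the bound is $D$;
 \item $\lambda=-2$: the bound is $(3-\varphi)\varphi^2D/(2+\varphi)=D$.
\end{itemize}
All of these are at most $\varphi D$. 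Hence $|y_i|\le\varphi D$ for all $0\le i\le J$.

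\emph{Assembly.} For every $1\le i\le J$,
\[
 |x_{i-1}|=\frac{|y_i-z_i|}{\sqrt5}\le\frac{\varphi+\varphi^2}{\sqrt5}\,D=\frac{\varphi^3}{\sqrt5}\,D<1.8945\,D .
\]
Thus every carry $x_0,\dots,x_{J-1}$ is bounded in absolute value by $\varphi^3D/\sqrt5$. The last carry $x_J=\lambda x_{J-1}$ needs a separate check. For $\lambda=-2$ the terminal computation gives the sharper estimate $|x_J|\le 2\varphi^2D/(2+\varphi)\approx1.45D$; the cases $\lambda=-1$ and $\lambda=0$ are trivial. Since carries are integers, all states $(x_i,x_{i-1})$ then lie in $[-\lceil1.895D\rceil,\lceil1.895D\rceil]^2$.

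\emph{Main obstacle.} The only delicate point is matching the constant exactly: $\varphi^3/\sqrt5=1.89442\ldots$ is what produces $1.895$. The step to verify carefully is that the terminal bound on $|y_J|$ does not exceed $\varphi D$ in each of the three cases, which the computation above confirms with room to spare.
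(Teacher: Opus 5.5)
Your proof is correct and follows the paper's argument. Your $y_i,z_i$ are exactly the paper's $P_i(\varphi),P_i(\psi)$, and your stable bound $\varphi^2D$ and terminal bounds $|y_J|\le\varphi^{-2}D,\,D,\,D$ coincide with the paper's. Your forward escape argument is the contrapositive of the backward contraction the paper expresses through the prefix identity \eqref{eq:prefixidentity}.

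The garbled intermediate expression for $x_i$ should be deleted. Your final formula $x_i=(\varphi y_i+(\varphi-1)z_i)/\sqrt5$ is right and already bounds $x_J$ by $\varphi^3D/\sqrt5$, as the paper does, so the separate check of $x_J$ is harmless but unnecessary.
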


\begin{proof}
For $P_i(z)=\sum_{r=1}^i d_rz^{i-r}$, we have
\[
 x_i=\frac{\varphi P_i(\varphi)-\psi P_i(\psi)}{\sqrt5}
 \ \ \t{and}\ \ 
 x_{i-1}=\frac{P_i(\varphi)-P_i(\psi)}{\sqrt5}.
\]
First, $|P_i(\psi)|\le D/(1-|\psi|)=D\varphi^2$.  We now justify the
corresponding bound at $\varphi$.  If $m=J-i$, then
\begin{equation}\label{eq:prefixidentity}
 P_i(\varphi)=\varphi^{-m}P_J(\varphi)
 -\sum_{r=i+1}^Jd_r\varphi^{i-r}.
\end{equation}
For the shifted-Fibonacci terminal condition,
\[
 \varphi^2P_J(\varphi)=\psi^2P_J(\psi),
\]
so $|P_J(\varphi)|\le D\varphi^{-2}$ and
\[
 |P_i(\varphi)|
 \le D\left(\varphi^{-m-2}+\sum_{k=1}^m\varphi^{-k}\right)
 \le D\varphi.
\]
For the Lucas terminal condition,
\[
 (\varphi+2)P_J(\varphi)=(\psi+2)P_J(\psi)\ \ \t{and}\ \ 
  \frac{\psi+2}{\varphi+2}=\varphi^{-2}.
\]
Using the finite geometric sum in $P_J(\psi)$ gives
$|P_J(\varphi)|\le D(1-\varphi^{-J})<D$, and therefore
\[
 |P_i(\varphi)|
 \le D\bigg(\varphi^{-m}+\sum_{k=1}^m\varphi^{-k}\bigg)
 \le D\varphi.
\]
For the zero terminal condition, the first carry formula gives
\[
 \varphi P_J(\varphi)=\psi P_J(\psi).
\]
Since $|\psi|/\varphi=\varphi^{-2}$, it follows that
$|P_J(\varphi)|\le D$, and the same prefix estimate as in the Lucas case
again yields $|P_i(\varphi)|\le D\varphi$.
Consequently both displayed carry formulas are bounded in absolute value by
\[
 \frac{D(\varphi^2+\varphi)}{\sqrt5}<1.895D.
\]
\end{proof}

\subsection{A common Fibonacci--Lucas single-sum certificate}

Take $D=4$, $s=13/20$, and the $361$ states $(a,b)\in[-9,9]^2$.  The
transition
\begin{equation}\label{eq:commontransition}
 (a,b)\longmapsto(a+b+d,a),\qquad -4\le d\le4,
\end{equation}
has weight equal to the corresponding Fourier coefficient in
$P_{13/20,4}$.  This matrix is common to the Fibonacci and Lucas relations.
Lemma~\ref{lem:confinement} puts every zero-relation path for either terminal
condition inside the certified box.  Dropping the terminal condition and
summing over all terminal states can only increase the path sum.

The certificate rounds every weight upward with denominator $10^{15}$ and
verifies a positive integer vector $v$ for which
\begin{equation}\label{eq:commoncollatz}
 Mv<1.2333v
\end{equation}
coordinatewise.  The minimum cleared-denominator slack is
$$5511384341759040000.$$  Therefore
\begin{equation}\label{eq:commonrho}
 \rho(M)<1.2333.
\end{equation}
The Collatz inequality and the positive Bessel majorant imply, uniformly in
$\kappa\in\{\SF,\SL\}$ and the phase $\gamma$,
\begin{equation}\label{eq:commonmoment}
 \int_0^1\exp\!\left(\frac{13}{20}
 \Rea\bigl(\e^{-i\gamma}T_{\kappa}(\alpha)\bigr)\right)d\alpha
 \ll (1.2333)^J.
\end{equation}
The same estimate holds for $T_{\kappa}^{\sharp}$ by translation.

Put
\begin{equation}\label{eq:commonlambda}
 \lambda=0.7673
\end{equation}
and define
\[
 E_{\kappa}^{\mathrm{large}}=
 \{\al\in[0,1]:\ |T_{\kappa}(\al)|>\lambda J\ \t{or}\ 
 |T_{\kappa}(\al)^{\sharp}|>\lambda J\}\ \ \t{for}\ \kappa\in\{\SF,\SL\}.
\]
Discretize the phase circle into $2000$ points.  For every complex $z$, one
of these phases satisfies
$\Rea(\e^{-i\gamma}z)\ge|z|\cos(\pi/2000)$.  Markov's inequality,
\eqref{eq:commonmoment}, and \eqref{eq:Jsymp} give, uniformly in $\kappa$,
\begin{equation}\label{eq:commonlargemeas}
 \meas(E_{\kappa}^{\mathrm{large}})\ll N^{-\delta_*},
\end{equation}
where
\begin{equation}\label{eq:commondelta}
 \delta_*=\frac{\vartheta}{\log\varphi}
 \left(\frac{13}{20}\,0.7673\cos\frac\pi{2000}
       -\log1.2333\right)>0.6003725>\frac35.
\end{equation}
The last inequality is checked by exact rational upper and lower bounds for
the logarithms and cosine in the companion certificate.

\subsection{The period-three six-Fibonacci product certificates}

Normalize
\[
 x(\alpha)=\frac{|A(\alpha)|}{J_A}\ \ \t{and}\ \ 
 y(\alpha)=\frac{|B(\alpha)|}{J_B}.
\]
Set
\begin{equation}\label{eq:F6lambdas}
 \lambda_{\mathrm e}=\frac{1183}{2000}\ \ \t{and}\ \ 
 \lambda_{\mathrm o}=\frac{349}{1000},
\end{equation}
and define
\begin{equation}\label{eq:F6badsets}
 E_{\mathrm e}=\{\al\in\mm:\ x(\al)y(\al)>\lambda_{\mathrm e}\}\ \ \t{and}\ \ 
 E_{\mathrm o}=\{\al\in\mm:\ x(\al)y(\al)^3>\lambda_{\mathrm o}\}.
\end{equation}
The Fibonacci indices occur with period-three type pattern $B,A,B$.  For a
parameter $s$, let $M_A(s)$ or $M_B(s)$ be the $361$-state matrix with
states $(a,b)\in[-9,9]^2$, transition
$(a,b)\mapsto(a+b+d,a)$, and the degree-four Bessel weight belonging to the
chosen position type.  Endpoint changes caused by $J\bmod3$ contribute only
a fixed multiplicative constant.

For the even case, take
\[
 u_{\mathrm e}=\frac{4843}{5000}\ \ \t{and}\ \ 
 v_{\mathrm e}=\frac{4981}{10000}.
\]
The exact positive-vector certificate proves
\begin{equation}\label{eq:F6rhoeven}
 \rho\bigl(M_B(v_{\mathrm e})M_A(u_{\mathrm e})M_B(v_{\mathrm e})\bigr)
 <1.904.
\end{equation}
If $xy>\lambda_{\mathrm e}$, weighted AM--GM gives
\[
 \frac{u_{\mathrm e}}3x+\frac{2v_{\mathrm e}}3y
 \ge2\sqrt{\frac{2\lambda_{\mathrm e}u_{\mathrm e}v_{\mathrm e}}9}.
\]
After a $2000$-point phase net, the exact rational certificate verifies
\begin{equation}\label{eq:F6deltaeven}
 \frac{\vartheta}{\log\varphi}\left(
 2\cos\frac\pi{2000}
 \sqrt{\frac{2\lambda_{\mathrm e}u_{\mathrm e}v_{\mathrm e}}9}
 -\frac13\log1.904\right)>0.60026>\frac35.
\end{equation}
Hence $\meas(E_{\mathrm e})\ll N^{-3/5-\eta_{\mathrm e}}$ for some fixed
$\eta_{\mathrm e}>0$.

For the odd case, we take
\[
 u_{\mathrm o}=\frac{4931}{10000}\ \ \t{and}\ \
 v_{\mathrm o}=\frac{7311}{10000}.
\]
The corresponding exact certificate proves
\begin{equation}\label{eq:F6rhoodd}
 \rho\bigl(M_B(v_{\mathrm o})M_A(u_{\mathrm o})M_B(v_{\mathrm o})\bigr)
 <1.886.
\end{equation}
Under $xy^3>\lambda_{\mathrm o}$, we have
\[
 \frac{u_{\mathrm o}}3x+\frac{2v_{\mathrm o}}3y
 \ge4\left(\frac{8\lambda_{\mathrm o}u_{\mathrm o}v_{\mathrm o}^3}
 {2187}\right)^{1/4},
\]
and the exact rate comparison is
\begin{equation}\label{eq:F6deltaodd}
 \frac{\vartheta}{\log\varphi}\left(
 4\cos\frac\pi{2000}
 \left(\frac{8\lambda_{\mathrm o}u_{\mathrm o}v_{\mathrm o}^3}{2187}
 \right)^{1/4}-\frac13\log1.886\right)>0.6012>\frac35.
\end{equation}
Thus $\meas(E_{\mathrm o})\ll N^{-3/5-\eta_{\mathrm o}}$.

These are genuinely joint estimates.  Orthogonality produces one signed
Fibonacci relation with position-dependent Bessel weights; the matrix
product counts that relation directly.  No independence of $A$ and $B$ is
used.
\subsection{A joint Fibonacci--Lucas product estimate}

Set
\begin{equation}\label{eq:Lambda}
 \Lambda=\frac{533}{2000}=0.2665,
\end{equation}
and define
\begin{equation}
 E_{3,3}={} \left\{\al\in[0,1]:\ \frac{|T_{\SF}(\al)|}{J}
 \left(\frac{|T_{\SL}(\al)|}{J}\right)^3\ge\Lambda\ \t{or}\ 
 \frac{|T_{\SF}^{\sharp}(\al)|}{J}
 \left(\frac{|T_{\SL}^{\sharp}(\al)|}{J}\right)^3\ge\Lambda\right\}.
 \label{eq:mixedbad}
\end{equation}
We want to prove
\begin{equation}\label{eq:mixedlargemeas}
 \meas(E_{3,3})\ll N^{-3/5-\eta_0}
\end{equation}
for some fixed $\eta_0>0$.

Use degree-three majorants with
\begin{equation}\label{eq:mixedparameters}
 s_{\SF}=\frac3{20}\ \ \t{and}\ \  s_{\SL}=\frac{17}{40}.
\end{equation}
A zero-frequency term in the joint moment has digits
$c_j,d_j\in[-3,3]$ and satisfies
\begin{equation}\label{eq:mixedrelation}
 \sum_jc_jF_{j+1}+\sum_jd_jL_j=0.
\end{equation}
Put $f_j=F_{j+1}$ for $j\ge-1$, so that $f_{-1}=F_0=0$ and
$f_0=F_1=1$, and put $d_j=0$ outside $1\le j\le J$.  Since
$L_j=f_j+f_{j-2}$, the relation is exactly
\begin{equation}\label{eq:mixedeffectiveword}
 d_2f_0+\sum_{i=1}^J(c_i+d_i+d_{i+2})f_i=0.
\end{equation}
Thus the full extended Fibonacci digit word has the endpoint digit $d_2$ at
$f_0$ and the effective digits $c_i+d_i+d_{i+2}$ at $f_i$.  Every digit has
absolute value at most $9$.  After reversing the first $J$ effective digits,
the digit used at step $i$ is $c_i+d_i+d_{i-2}$.  Two Lucas digits must
therefore be retained as memory.  The endpoint digit $d_2$ is already one of
the chosen Lucas digits; it carries no additional Bessel weight.  At the final step the endpoint digit is processed without an additional Bessel weight.  Its terminal restriction will be discarded for an upper
bound, which can only enlarge the path sum.

Take states
\begin{equation}\label{eq:mixedstates}
 (a,b,r,t),\qquad a,b\in[-23,23],\quad r,t\in[-3,3].
\end{equation}
There are $47^2\cdot7^2=108241$ states.  More explicitly, write
$\widetilde c_i=c_{J+1-i}$ and $\widetilde d_i=d_{J+1-i}$, with
$\widetilde d_i=0$ outside the word.  The interior effective digit at step
$i$ is
$\widetilde c_i+\widetilde d_i+\widetilde d_{i-2}$.  If
$r=\widetilde d_{i-2}$ and $t=\widetilde d_{i-1}$, choosing
$c=\widetilde c_i$ and $d=\widetilde d_i$ gives the transition
\begin{equation}\label{eq:mixedtransition}
 (a,b,r,t)\longmapsto(a+b+c+d+r,a,t,d)
\end{equation}
with weight $w_{\SF}(c)w_{\SL}(d)$.  Thus the matrix records exactly every choice of the first $J$ reversed effective digits.  The actual paths start from the carry state
$(0,0)$ and the memory state $(0,0)$.  If its coefficients are denoted by
$e_0,e_1,\ldots,e_J$, then reversing all $J+1$ digits gives
\[
 x_{J+1}=\sum_{i=0}^Je_iF_{i+1}=0.
\]
Thus the complete word has the zero terminal condition in
Lemma~\ref{lem:confinement}.  Applying that lemma with $D=9$ places all its
carries in $[-18,18]^2$.  Hence the certified box $[-23,23]^2$
contains every state occurring during the first $J$ transitions.  Dropping the final transition and its terminal condition, and then summing over all terminal states, only enlarges the sum by a constant independent of $J$.  The exact certificate verifies upward-rounded degree-three Bessel
weights
and a positive vector $v$ satisfying
\begin{equation}\label{eq:mixedcollatz}
 M_{3,3}v<\frac{283}{250}v.
\end{equation}
The vector has minimum $884$, maximum $283469542$, and the minimum exact
cleared-denominator slack is $153011511193884000$.  Hence
\begin{equation}\label{eq:mixedrho}
 \rho(M_{3,3})<1.132.
\end{equation}
The same path-sum argument gives, uniformly in two independent phases
$\gamma_{\SF}$ and $\gamma_{\SL}$, the estimation
\begin{equation}\label{eq:mixedmoment}
 \int_0^1\exp\!\left(
 s_{\SF}\Rea(\e^{-i\gamma_{\SF}}T_{\SF}(\alpha))
 +s_{\SL}\Rea(\e^{-i\gamma_{\SL}}T_{\SL}(\alpha))
 \right)d\alpha
 \ll (1.132)^J.
\end{equation}
The discarded endpoint restriction discussed above affects only the
absolute implied constant.

Write $x=|T_{\SF}|/J$ and $y=|T_{\SL}|/J$.  If $xy^3\ge\Lambda$, then the
weighted arithmetic--geometric mean calculation gives
\begin{equation}\label{eq:mixedsupport}
 s_{\SF}x+s_{\SL}y
 \ge4\left(\frac{\Lambda s_{\SF}s_{\SL}^3}{27}\right)^{1/4}
 >0.413.
\end{equation}
Using two independent $2000$-point phase nets, Markov's inequality and
\eqref{eq:mixedmoment} give
\[
 \meas\{xy^3\ge\Lambda\}
 \ll\exp\left[-J\left(0.413\cos\frac\pi{2000}
 -\log1.132\right)\right].
\]
The same estimate holds after the common shift $\alpha\mapsto\alpha+1/2$.
The exact rational comparison
\begin{equation}\label{eq:mixedexponent}
 \frac{\vartheta}{\log\varphi}
 \left(0.413\cos\frac\pi{2000}-\log1.132\right)>\frac35
\end{equation}
proves \eqref{eq:mixedlargemeas}.

\section{Exact local lower bounds}\label{sec:localmajor}
\setcounter{lemma}{0}
\setcounter{theorem}{0}
\setcounter{corollary}{0}
\setcounter{proposition}{0}
\setcounter{remark}{0}
\setcounter{equation}{0}

All local lower bounds use the positive expansion \eqref{eq:divexp}, retain
only odd squarefree $d\le300$, and replace $C_2$ by the certified lower bound
$0.6601$.  Every omitted divisor term is nonnegative.

For the fixed Fibonacci compositions, use the joint period
$\operatorname{lcm}(\pi(d),3)$.  Let the two histograms count $F_n\pmod d$
with $3\mid n$ and $3\nmid n$.  Exact cyclic convolution gives, uniformly in
the target residue,
\begin{equation*}
 M_{\mathrm e}^{(0)}>1.994280988385600
 \ \ \text{for two even and four odd Fibonacci terms},\label{eq:localFe}
 \end{equation*}
 and
 \begin{equation*}
 M_{\mathrm o}^{(0)}>1.994339515882465
 \ \ \text{for one even and five odd Fibonacci terms}.\label{eq:localFo}
\end{equation*}

For the Lucas and mixed parity-projected families, work modulo $2d$.  Over a
complete Pisano period, let $c_{\SF,2d}$ and $c_{\SL,2d}$ be the exact
residue histograms of $F_{j+1}$ and $L_j$.  The certificate evaluates the
minimum target coefficient of
$c_{\SL,2d}^{*7}$ and of
$c_{\SF,2d}^{*3}*c_{\SL,2d}^{*3}$.  It gives
\begin{align}
 M_{\SL,7}^{(0)}&>0.997011352088696,\label{eq:localL7}\\
 M_{3,3}^{(0)}&>0.996635797573813.\label{eq:localM33}
\end{align}

\begin{proposition}[Major-arc lower bounds]\label{prop:localmajor}
Let $\omega=10^{-4}$. As $N\to\infty$, we have
\begin{align}
 \int_{\M} S^2A^2B^4e(-N\alpha)\,d\alpha
 &\ge(1.994081560286761-o(1))NJ_A^2J_B^4,
 \label{eq:majorFe}\\
 \int_{\M} S^2AB^5e(-N\alpha)\,d\alpha
 &\ge(1.994140081930877-o(1))NJ_AJ_B^5,
 \label{eq:majorFo}\\
 \int_{\M} S^2\mathcal P_{\SL,7,N}e(-N\alpha)\,d\alpha
 &\ge(0.9969003-o(1))NJ^7,\label{eq:majorL7}\\
 \int_{\M} S^2\mathcal P_{3,3,N}e(-N\alpha)\,d\alpha
 &\ge(0.99650034-o(1))NJ^6.\label{eq:majorM33}
\end{align}
\end{proposition}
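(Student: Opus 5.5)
Each of the four integrals is handled by expanding the recurrence sums into their discrete terms and swapping sum and integral. For \eqref{eq:majorFe}, for example,
\[
 \int_{\M} S^2A^2B^4e(-N\alpha)\,d\alpha=\sum_{j_1,j_2\in\calA}\ \sum_{k_1,\dots,k_4\in\calB}G_N\bigl(N-F_{j_1+1}-F_{j_2+1}-F_{k_1+1}-\cdots-F_{k_4+1}\bigr).
\]
In every term the argument $m$ is even, by the parity bookkeeping of Section~\ref{sec:notation}. It also satisfies $N/2\le m\le N$, because each recurrence term is at most $L_J\le N^\vartheta/100$ and so $m=N-O(N^\vartheta)$. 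Proposition~\ref{prop:nested}(a) therefore applies to every term.

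For \eqref{eq:majorL7} and \eqref{eq:majorM33}, the projector $\mathcal P$ keeps exactly the tuples whose total has the parity of $N$. The surviving $m$ are again even, and the discarded terms vanish identically, so the same expansion works.

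First step: handle the tuples for which some $q_\nu$ divides $C_\omega m$. There are $O_\omega(1)$ moduli $q_\nu$, each tending to infinity. By Corollary~\ref{cor:bad}, the singular-series weight of these exceptional tuples is $o(J^k)$. The uniform bound $|G_N(m)|\le B_\omega\Sing(m)N$ then shows they contribute $o(NJ^k)$.

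For every other tuple, \eqref{eq:Gmain} gives
\[
 G_N(m)\ge \Sing(m)m-\omega\Sing(m)N\ge(1-\omega-o(1))\Sing(m)N,
\]
using $m=N-O(N^\vartheta)$. Since $\Sing\ge0$, we may add the exceptional tuples back in at cost $o(NJ^k)$. The integral is therefore at least
\[
 (1-\omega-o(1))\,N\sum_{\text{tuples}}\Sing\Bigl(N-\sum u\Bigr).
\]

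The main step is the lower bound for the mean $\frac1{J^k}\sum\Sing(N-\sum u)$ (normalized by $J_A^2J_B^4$ or $J_AJ_B^5$ in the Fibonacci cases). Insert the divisor expansion \eqref{eq:divexp} and drop every term except the odd squarefree $d\le300$, which is legitimate since all terms are nonnegative. Also replace $C_2$ by $0.6601$, using \eqref{eq:C2bounds}.

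For each such fixed $d$, the indices run over $J/T+O(1)$ complete periods of length $T=\operatorname{lcm}(\pi(d),3)$ (or the Pisano period modulo $2d$ in the projected cases). Hence the proportion of tuples with $d\mid N-\sum u$ equals the normalized convolution coefficient of the residue histograms at the residue of $N$, up to $O(1/J)$. Only finitely many $d$ are involved, so the $O(1/J)$ errors sum to $o(1)$.

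Taking the minimum over target residues yields exactly the certified constants $M^{(0)}_{\mathrm e}$, $M^{(0)}_{\mathrm o}$, $M^{(0)}_{\SL,7}$ and $M^{(0)}_{3,3}$ from the display above. In the projected cases, the factor $\tfrac12$ and the factor $2$ for the mod-$2$ condition are already absorbed into the mod $2d$ histogram count. Multiplying each constant by $(1-\omega)=0.9999$ gives the stated numbers: for instance $0.9999\cdot1.9942809883856$, and $0.9999\cdot0.99701135\ge0.9969003$.

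The step needing most care is matching the normalization of the mod-$2d$ histograms with the projector $\mathcal P$: one has to check that it is precisely the counting of parity-correct tuples, weighted by the $\1_{2\mid h}$ factor in $\Sing$, that produces $M^{(0)}$. The remaining steps are routine.
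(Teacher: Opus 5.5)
Your proposal is correct and follows the same route as the paper. You expand into $G_N(N-\sum u)$, apply the nested major-arc Goldbach asymptotic to nonexceptional tuples and absorb exceptional ones via Corollary~\ref{cor:bad}. You then bound the mean singular series from below by the truncated divisor expansion ($d\le 300$, $C_2\ge 0.6601$) evaluated by periodic histogram convolution, and multiply by $1-\omega$. You also spell out details the paper's brief proof leaves implicit: the check that every $m$ is even with $N/2\le m\le N$, and the parity-projector bookkeeping modulo $2d$.
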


\begin{proof}
For each fixed retained $d$, the first $J$ indices have normalized residue
frequencies equal to the complete-period frequencies plus $O_d(1/J)$.
Finite cyclic convolution preserves this error uniformly in the target
residue.  Proposition~\ref{prop:nested} gives the Goldbach main term for all
nonexceptional tuples, and Corollary~\ref{cor:bad} absorbs the exceptional
ones.  Every target equals $N-o(N)$ and is positive.  Multiplication by
$1-\omega$ gives the four displayed constants.  The integer convolution
programs use downward rounding at scale $10^{15}$.
\end{proof}

\section{Proofs of Theorems \ref{thm:F6}--\ref{thm:mixed33}}\label{sec:completion}
\setcounter{lemma}{0}
\setcounter{theorem}{0}
\setcounter{corollary}{0}
\setcounter{proposition}{0}
\setcounter{remark}{0}
\setcounter{equation}{0}

\medskip
\noindent{\it Proof of Theorem~\ref{thm:F6}}.
Suppose first that $N$ is even.  On $\mm\setminus E_{\mathrm e}$,
\begin{equation}\label{eq:F6goodeven}
 |A|^2|B|^4=(|A||B|)^2|B|^2
 \le\lambda_{\mathrm e}^2J_A^2J_B^2|B|^2.
\end{equation}
Equations \eqref{eq:Benergy}--\eqref{eq:EB} therefore give
\[
 \int_{\mm\setminus E_{\mathrm e}}|S|^2|A|^2|B|^4\,d\alpha
 <(1.993349561749+o(1))NJ_A^2J_B^4.
\]
On $E_{\mathrm e}$ use \eqref{eq:Vaughan}, the trivial bound
$|A|^2|B|^4\le J_A^2J_B^4$, and \eqref{eq:F6deltaeven}; the contribution is
$o(NJ_A^2J_B^4)$.  Comparison with \eqref{eq:majorFe} leaves
\[
 1.994081560286761-1.993349561749
 >0.000731998537761.
\]
Hence $R_{\mathrm e}(N)>0$ for every sufficiently large even $N$.

If $N$ is odd, then on $\mm\setminus E_{\mathrm o}$,
\begin{equation}\label{eq:F6goododd}
 |A||B|^5=(|A||B|^3)|B|^2
 \le\lambda_{\mathrm o}J_AJ_B^3|B|^2.
\end{equation}
Thus the good minor arcs contribute at most
\[
 (1.988380036+o(1))NJ_AJ_B^5.
\]
The bad-set contribution is $o(NJ_AJ_B^5)$ by
\eqref{eq:Vaughan} and \eqref{eq:F6deltaodd}.  Comparing with
\eqref{eq:majorFo} leaves
\[
 1.994140081930877-1.988380036
 >0.005760045930877.
\]
Therefore $R_{\mathrm o}(N)>0$ for every sufficiently large odd $N$.  The
two parity cases prove Theorem~\ref{thm:F6}. \qed

\medskip
\noindent{\it Proof of Theorem~\ref{thm:L7}}.
Let
\[
 E_{\SL}^{\mathrm{large}}=
 \{\al\in\mm:\ |T_{\SL}(\al)|>0.7673J\ \t{or}\ |T_{\SL}^\sharp(\al)|>0.7673J\}.
\]
On its complement,
\[
 |\mathcal P_{\SL,7,N}|
 \le(0.7673)^5J^5\mathcal Q_{\SL}.
\]
By \eqref{eq:minorenergy} and \eqref{eq:Evalues}, the good minor arcs are
below
\[
 (0.942703702775+o(1))NJ^7.
\]
The bad set has measure $O(N^{-\delta_*})$ with $\delta_*>3/5$ by
\eqref{eq:commonlargemeas}; together with \eqref{eq:Vaughan} its contribution
is $o(NJ^7)$.  Equation \eqref{eq:majorL7} leaves a margin exceeding
$0.054196597225$.  This proves Theorem~\ref{thm:L7}. \qed

\medskip
\noindent{\it Proof of Theorem~\ref{thm:mixed33}}.
Outside the set $E_{3,3}$ in \eqref{eq:mixedbad},
\[
 |T_{\SF}|^3|T_{\SL}|^3
 \le\frac{533}{2000}J^4|T_{\SF}|^2,
\]
and the same inequality holds for the shifted sums.  Hence
\[
 |\mathcal P_{3,3,N}|
 \le\frac{533}{2000}J^4\mathcal Q_{\SF}.
\]
The good minor arcs are therefore below
\[
 (0.9242052105+o(1))NJ^6.
\]
The bad set satisfies \eqref{eq:mixedlargemeas}, so its contribution is
$o(NJ^6)$.  Comparing with \eqref{eq:majorM33} leaves a margin exceeding
$0.0722951295$, proving Theorem~\ref{thm:mixed33}. \qed

\Ack. The author's conservation with AI concerning the techniques in \cite{PR03,PR20}
provides the basis of our present research involving Fibonacci numbers and  Lucas numbers.
The author is grateful to Prof. Lilu Zhao and Dr. Guang-Liang Zhou for their helpful
comments.


\begin{thebibliography}{99}
\bibitem{BD66}
E. Bombieri and H. Davenport,
\emph{Small differences between prime numbers},
Proc. Roy. Soc. London Ser. A \textbf{293} (1966), 1--18.

\bibitem{Cro}
R. Crocker,
\emph{On a sum of a prime and two powers of two},
Pacific J. Math. {\bf 36} (1971), 103--107.

\bibitem{Gallagher75}
P. X. Gallagher,
\emph{Primes and powers of $2$},
Invent. Math. \textbf{29} (1975), 125--142.

\bibitem{HeathBrownPuchta02}
D. R. Heath-Brown and J.-C. Schlage--Puchta,
\emph{Integers represented as a sum of primes and powers of two},
Asian J. Math. \textbf{6} (2002), 535--565.

\bibitem{JT26}
D. R. Johnston and T. S. Trudgian,
\emph{An update on the Linnik--Goldbach problem},
arXiv:2605.17825v2, 2026.

\bibitem{Lee}
K. S. E. Lee,
\emph{On the sum of a prime and a Fibonacci number},
Int. J. Number Theory {\bf 6} (2010), 1669--1676.

\bibitem{Li1}
H.-Z. Li,
\emph{The number of powers of $2$ in a representation of large even integers
by sums of such powers and of two primes},
Acta Arith. {\bf 92} (2000), 229--237.

\bibitem{Li2}
H.-Z. Li,
\emph{The number of powers of $2$ in a representation of large even integers
by sums of such powers and of two primes (II)},
Acta Arith. {\bf 96} (2001), 369--379.

\bibitem{Linnik51}
Yu. V. Linnik,
\emph{Prime numbers and powers of two},
Trudy Mat. Inst. Steklov. \textbf{38} (1951), 151--169.

\bibitem{Linnik53}
Yu. V. Linnik,
\emph{Addition of prime numbers and powers of one and the same number},
Mat. Sb. (N.S.) \textbf{32(74)} (1953), 3--60.

\bibitem{LLW}
J.-Y. Liu, M.-C. Liu and T.-Z. Wang,
\emph{The number of powers of $2$ in a representation of large even integers (II)},
Sci. China Ser. A \textbf{41} (1998), 1255--1271.

\bibitem{Pintz23}
J. Pintz,
\emph{A new explicit formula in the additive theory of primes with
applications I: the Goldbach and generalized twin prime problems},
Acta Arith. \textbf{210} (2023), 53--94;
arXiv:1804.05561.

\bibitem{PR03}
J. Pintz and I. Z. Ruzsa,
\emph{On Linnik's approximation to Goldbach's problem. I},
Acta Arith. \textbf{109} (2003), 169--194.

\bibitem{PR20}
J. Pintz and I. Z. Ruzsa,
\emph{On Linnik's approximation to Goldbach's problem. II},
Acta Math. Hungar. \textbf{161} (2020), 569--582.

\bibitem{Roma}
N.P. Romanoff,
\emph{\"Uber einige S\"atze der additiven Zahlentheorie},
Math. Ann. {\bf 57} (1934), 668--678.

\bibitem{RS62}
J. B. Rosser and L. Schoenfeld,
\emph{Approximate formulas for some functions of prime numbers},
Illinois J. Math. \textbf{6} (1962), 64--94.

\bibitem{SS92} Z.-H. Sun and Z.-W. Sun,
\emph{Fibonacci numbers and Fermat's last theorem},
Acta Arith. \textbf{60} (1992), 371--388.

\bibitem{S10} Z.-W. Sun,
\emph{Mixed sums of primes and other terms},
in: Additive Number Theory (eds., D. Chudnovsky and G. Chudnovsky), Springer, New York, 2010, pp. 341--353.

\bibitem{S24} Z.-W. Sun,
\emph{Fibonacci Numbers and Hilbert's Tenth Problem},
Harbin Institute of Technology Press, Harbin, 2024.

\bibitem{Vaughan97}
R. C. Vaughan,
\emph{The Hardy--Littlewood Method}, 2nd ed.,
Cambridge Tracts in Mathematics 125, Cambridge University Press, 1997.

\bibitem{Wall60}
D. D. Wall,
\emph{Fibonacci series modulo $m$},
Amer. Math. Monthly \textbf{67} (1960), 525--532.

\bibitem{Wang25}
R.-J. Wang,
\emph{On the sum of a Lucas number and a prime},
Period. Math. Hungar. \textbf{90} (2025), 434--439.

\bibitem{Wang}
T.-Z. Wang,
\emph{On Linnik's almost Goldbach theorem},
Sci. China Ser. A \textbf{42} (1999), 1155--1172.

\end{thebibliography}
\end{document}